\newtheorem{Th}{Theorem}[section] 
\newtheorem{Prop}{Proposition}[section]   
\newtheorem{Coro}{Corollary}[section]   
\newtheorem{Rem}{Remark}[section]
\newcommand{\R}{\mathbb{R}}
\newcommand{\Z}{\mathbb{Z}}
\newcommand{\Sz}{\mathcal{S}}
\newcommand{\A}{{\mathcal A}}
\newcommand{\aA}{{\accentset{\,\,\,\circ}\A}}
\newcommand{\aW}{{\accentset{\,\circ}W}}
\newcommand{\x}{\langle x\rangle}
\newcommand{\p}{{\rm p}}
\newcommand{\s}{\mathbb{S}}
\newcommand{\id}{\text{\rm id}}
\newcommand{\const}{\text{\rm const}}
\newcommand{\tr}{\text{\rm tr}\,}
\newcommand{\Div }{\mathop{\rm div}\nolimits}
\newcommand{\dt}[1]{\accentset{\mbox{\bfseries .}}{#1}}
\newcommand{\dd}{{\rm d}}
\newcommand{\floor}[1]{\lfloor #1 \rfloor}
\begin{document}

\title{Spatial decay/asymptotics in the Navier-Stokes equation}   
 
\author{Peter Topalov} 

\maketitle

\begin{abstract}  
We discuss the appearance of spatial asymptotic expansions of solutions of the Navier-Stokes equation on $\R^n$.
In particular, we prove that the Navier-Stokes equation is locally well-posed in a class of weighted Sobolev and asymptotic spaces.
The solutions depend analytically on the initial data and time and (generically) develop non-trivial asymptotic 
terms as $|x|\to\infty$. In addition, the solutions have a spatial smoothing property that depends on the order of 
the asymptotic expansion.
\end{abstract}   


\section{Introduction}\label{sec:introduction}
In this paper we study the spatial behavior of solutions of the Navier-Stokes equation on the plane $\R^d$, $d\ge 2$,
as a function of the spatial decay of the initial data.
The Navier-Stokes equation describes the time evolution of the velocity field $u(x,t)$, $x\in\R^d$, $t\ge 0$, 
of a viscose fluid,
\begin{equation}
\left\{
\begin{array}{l}\label{eq:NS}
u_t+u\cdot\nabla u=\nu \Delta u-\nabla\p,\quad\Div u=0,\\
u|_{t=0}=u_0,
\end{array}
\right.
\end{equation}
where $\p(x,t)$ is the pressure and the constant $\nu>0$ is the viscosity parameter. 
Here, $u\cdot\nabla :=\sum_{j=1}^d u_j \frac{\partial}{\partial x_j}$ is the derivative in the direction of $u$, 
$\Div u$ is the divergence of $u$, and $\Delta:=\sum_{j=1}^d\frac{\partial^2}{\partial x_j^2}$ is the Laplace operator. 

In order to study the spatial behavior of the solutions of \eqref{eq:NS}, we consider a class of
weighted Sobolev spaces on $\R^d$ as well as a class of asymptotic spaces whose elements
are functions on $\R^d$ that have asymptotic expansions as $|x|\to\infty$, where $|x|$ denotes the Euclidean norm
of $x\in\R^d$. In order to define these spaces, assume that $1<p<\infty$ and let $C_c^\infty(\R^d)$ be the space of 
smooth functions with compact support on $\R^d$ and $\Z_{\ge 0}$ be the set of the non-negative integers.
For a given regularity exponent $m\in\Z_{\ge 0}$ and a weight $\delta\in\R$ the {\em weighted Sobolev space}
$W_\delta^{m,p}\equiv W_\delta^{m,p}(\R^d)$ is defined as the closure of $C_c^\infty(\R^d)$ in the norm 
\begin{equation}\label{W-norm}
\|f\|_{W_\delta^{m,p}}:=
\sum_{|\alpha|\leq m}\big\|\langle x\rangle^{\delta+|\alpha |}\partial^\alpha f\big\|_{L^p},\quad
\x:=\sqrt{1+|x|^2},
\end{equation}
where $\alpha\in\Z_{\ge 0}^d$, $\alpha=(\alpha_1,...,\alpha_d)$, is a multi-index, 
$\partial^\alpha\equiv\partial_1^{\alpha_1}\cdots\partial_d^{\alpha_d}$, 
and $\partial_k\equiv\partial/\partial x_k$ is the (weak) partial derivative in the direction of $x_k$
(\cite{NW,Triebel,McOwen,Lockhart,Bartnik}). For $\alpha=0$ we set $L^p_\delta:=W^{0,p}_\delta$. 
It follows from \cite[Lemma 1.2 $(d)$]{McOwenTopalov2} (cf. also \cite{Bartnik}, \cite[Appendix \ref{ap:A-spaces}]{SultanTopalov}) that 
if $f\in W^{m,p}_\delta(\R^d)$ with $m>\frac{d}{p}$ then $f\in C^k(\R^d)$ for $0\le k<m-\frac{d}{p}$ and we have that
\begin{subequations}
\begin{equation}\label{eq:W-estimate}
\x^{\delta+\frac{d}{p}+|\alpha|}\big|\partial^\alpha f(x)\big|\le C\,\|f\|_{W^{m,p}_\delta}
\quad\text{\rm for}\quad \forall x\in\R^d
\end{equation}
and 
\begin{equation}\label{eq:W-estimate*}
|x|^{\delta+\frac{d}{p}+|\alpha|}\big|\partial^\alpha f(x)\big|\to 0 \ \text{\rm as}\ |x|\to\infty
\end{equation}
\end{subequations}
for any $|\alpha|<m-\frac{d}{p}$.
In addition, for $m>\frac{d}{p}$ and $\delta_1,\delta_2\in\R$ the pointwise multiplication
\begin{equation}\label{eq:W-product}
W^{m,p}_{\delta_1}\times W^{m,p}_{\delta_2}\to W^{m,p}_{\delta_1+\delta_2+\frac{d}{p}},
\quad (f,g)\mapsto fg,
\end{equation}
is bounded (see, e.g., \cite[Proposition 1.2]{McOwenTopalov2}, \cite[Appendix \ref{ap:A-spaces}]{SultanTopalov}). 
In particular, the weighted Sobolev space $W^{m,p}_\delta$ is a Banach algebra for $m>\frac{d}{p}$ and
$\delta+\frac{d}{p}\ge 0$.
Denote by $\aW^{m,p}_\delta$ the closed subspace in $W^{m,p}_\delta$ that consists of the divergence free vector fields
$u\in W^{m,p}_\delta$ and let $B_{\aW^{m,p}_\delta}(\rho)$ be the centered at zero open ball of radius $\rho>0$
in $\aW^{m,p}_\delta$. We also choose a cut-off function $\chi\in C^\infty(\R)$ such that $\chi(\rho)=0$ for 
$|\rho|\le 1$ and $\chi(\rho)=1$ for $|\rho|\ge 2$ and denote by $\s^{d-1}$ the unit sphere in $\R^d$. 
Finally, we set $r:=|x|$ and $\theta:=\frac{x}{|x|}\in\s^{d-1}$ for $x\ne 0$.
The following well-posedness result extends Theorem 1.2, proven in the case of the Euler equation 
in \cite{McOwenTopalov4}. The Theorem establishes a dichotomy of solutions of 
the Navier-Stokes equation depending on the value of the weight parameter $\delta+\frac{d}{p}\ge 0$.

\begin{Th}\label{th:NS_W-spaces}
Assume that $m>2+\frac{d}{p}$ and $\delta+\frac{d}{p}\ge 0$. Then, we have:
\begin{itemize}
\item[(a)] If $0\le\delta+\frac{d}{p}<d+1$ then for any $\rho>0$ there exists $T>0$ such that
for any divergence free initial data $u_0\in B_{\aW^{m,p}_\delta}(\rho)$,
there exists a unique solution of the Navier-Stokes equation \eqref{eq:NS},
\[
u\in C\big([0,T],\aW^{m,p}_\delta\big)\cap C^1\big([0,T],\aW^{m-2,p}_\delta\big)\,.
\]
\item[(b)] If $d+1\le\delta+\frac{d}{p}$ then for any $\rho>0$ there exists $T>0$ such that
for any divergence free initial data $u_0\in B_{\aW^{m,p}_\delta}(\rho)$ there exists a unique solution of
the Navier-Stokes equation \eqref{eq:NS} of the form
\begin{equation}\label{eq:solution(b)}
u(t,x)=\sum_{d+1\le k\le\delta+\frac{d}{p}; k\in\Z}\chi(r)\,\frac{a_k(t,\theta)}{r^k}+f(t,x)
\end{equation}
where we have that\footnote{Note that in view of \eqref{eq:W-estimate*} we have
$f(t,x)=o\big(1/r^{\delta+\frac{d}{p}}\big)$ as $|x|\to\infty$
for any given $t\in[0,T]$.}
\[
f\in C\big([0,T],W^{m,p}_\delta\big)\cap C^1\big([0,T],W^{m-2,p}_\delta\big)
\]
and
\[
a_k\in C^1\big([0,T],C(\s^{d-1},\R^d)\big),\quad d+1\le k\le\delta+\frac{d}{p},\quad k\in\Z,
\]
where $C(\s^{d-1},\R^d)$ denotes the Banach space of continuous maps  $\s^{d-1}\to\R^d$.
In addition, for any given $t\in[0,T]$ and $k$ integer with $d+1\le k\le\delta+\frac{d}{p}$ the components 
$a_k^1(t),...,a_k^d(t)$ of the function $a_k(t) : \s^{d-1}\to\R^d$ are restrictions to the sphere $\s^{d-1}$
of homogeneous harmonic polynomials on $\R^d$ of degree $k-d+2$.
\end{itemize}
The solution in (a) and (b) depends Lipschitz continuously on the initial data $u_0\in W^{m,p}_\delta$
in the sense that the data-to-solution map is Lipschitz continuous. 
Moreover, the solutions depend analytically on time and the initial data in the sense that the map
\[
(0,T)\times B_{\aW^{m,p}_\delta}(\rho)\to\aW^{m,p}_\delta,\quad(t,u_0)\mapsto u(t;u_0),
\]
in the case (a) and the map
\[
(0,T)\times B_{\aW^{m,p}_\delta}(\rho)\to W^{m,p}_\delta\times \big(C(\s^{d-1},\R^d)\big)^{k_*},
\quad(t,u_0)\mapsto
\Big(f(t;u_0),a_k(t;u_0),d+1\le k\le\delta+\frac{d}{p}\Big),
\]
in the case (b), where $k_*$ is the number of integers in the interval $[d+1,\delta+\frac{d}{p}]$, are analytic.
\end{Th}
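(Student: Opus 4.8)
The plan is to recast \eqref{eq:NS} in mild (integral) form by means of the Leray projection $\mathbb P=\Id-\nabla\Delta^{-1}\Div$, solve the resulting fixed point equation by a contraction argument carried out in $\aW^{m,p}_\delta$ in case (a) and in the asymptotic space $\aA^{m,p}_\delta$ (the divergence-free subspace of the space of vector fields of the form \eqref{eq:solution(b)}) in case (b), and then upgrade existence and uniqueness to Lipschitz and analytic dependence via the analytic implicit function theorem. Projecting \eqref{eq:NS} removes the pressure and gives $u_t=\nu\Delta u-\mathbb P\,\Div(u\otimes u)$, equivalently
\[
u=\Phi(u_0,u),\qquad
\Phi(u_0,u)(t):=e^{\nu t\Delta}u_0-\int_0^t e^{\nu(t-s)\Delta}\,\mathbb P\,\Div\big(u(s)\otimes u(s)\big)\,ds .
\]
The preparatory inputs, largely parallel to the Euler analysis of \cite{McOwenTopalov4} and the weighted/asymptotic calculus of \cite{SultanTopalov}, are: (i) $\{e^{\nu t\Delta}\}_{t\ge 0}$ is a bounded strongly continuous semigroup on $W^{m,p}_\delta$ and on $\A^{m,p}_\delta$ (strong continuity at $t=0$ because $C_c^\infty$ is dense in $W^{m,p}_\delta$ and $\|e^{\nu\tau\Delta}g-g\|=O(\tau)$ there, and because $e^{\nu\tau\Delta}\big(\chi(r)a_k(\theta)/r^k\big)-\chi(r)a_k(\theta)/r^k\in W^{m,p}_\delta$ with norm $O(\tau)$), with the parabolic smoothing bound $\|e^{\nu\tau\Delta}g\|_{W^{m,p}_\delta}\lesssim\tau^{-1/2}\|g\|_{W^{m-1,p}_\delta}$ and its analogue on $\A^{m,p}_\delta$, and $\tau\mapsto e^{\nu\tau\Delta}$ extends to a holomorphic operator-valued map on $\{\real\,\tau>0\}$; (ii) the pointwise product \eqref{eq:W-product} and the Banach-algebra property give $\|\Div(u\otimes v)\|_{W^{m-1,p}_{2\delta+\frac{d}{p}+1}}\lesssim\|u\|_{W^{m,p}_\delta}\|v\|_{W^{m,p}_\delta}$, with the analogous bilinear and analytic bound on $\A^{m,p}_\delta$; and (iii) the decisive step, the mapping properties of $\mathbb P$ on these spaces.

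I would establish (iii) first, since it simultaneously produces the dichotomy and the exact shape of the asymptotic terms. Write $\sigma:=\delta+\frac{d}{p}\ge 0$. If $u\in W^{m,p}_\delta$ then $u\otimes u$ decays like $r^{-2\sigma}$ and $\Div(u\otimes u)=u\cdot\nabla u$ like $r^{-(2\sigma+1)}$; this is the decay of the \emph{local} part of $\mathbb P\,\Div(u\otimes u)$. The \emph{nonlocal} part is $-\nabla\Delta^{-1}\partial_i\partial_j(u_iu_j)$; since $\partial_i\partial_j(\,\cdot\,)$ is a second-order divergence of a rapidly decaying symmetric tensor, its moments of order $\le 1$ vanish, so the Newtonian-potential expansion of $\Delta^{-1}\partial_i\partial_j(u_iu_j)$ at $|x|\to\infty$ begins at the quadrupole, i.e.\ at order $r^{-d}$ (for $d=2$ the logarithm multiplies the vanishing monopole and drops out), and applying $\nabla$ lowers the order by one to $r^{-(d+1)}$. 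Hence $\mathbb P\,\Div(u\otimes u)$ has decay rate $\min\{2\sigma+1,\,d+1\}$, whose minimum with $\sigma$ is $\ge\sigma$ precisely when $\sigma\le d+1$; this is why the iteration closes inside $W^{m,p}_\delta$ in case (a), while for $\sigma\ge d+1$ the unavoidable $r^{-(d+1)}$ tail of the nonlocal part — together with the further multipole terms it carries up to order $r^{-\sigma}$ — must be split off as asymptotic terms. Each term of the multipole/derivative expansion of $\nabla\Delta^{-1}\partial_i\partial_j(u_iu_j)$ is a homogeneous harmonic function $r^{-(d-2+\ell)}Y_\ell(\theta)$; matching $d-2+\ell=k$ gives $\ell=k-d+2$, and by the Kelvin-transform correspondence between decaying homogeneous harmonic functions and harmonic polynomials, the angular factor $a_k$ is the restriction to $\s^{d-1}$ of a homogeneous harmonic polynomial of degree $k-d+2$, the bottom term being $k=d+1$, $\ell=3$. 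It then remains to check that this ansatz closes under $\Phi$: products of asymptotic terms decay too fast to feed new terms in the range $[d+1,\sigma]$, and $e^{\nu(t-s)\Delta}$ sends $\chi(r)a_k(\theta)/r^k$ to itself — same coefficient, since the Gaussian has unit mass and $\Delta(\chi(r)a_k(\theta)/r^k)$ is of strictly lower order off $\supp\chi'$ — plus an $O(t-s)$ remainder in $W^{m,p}_\delta$; reading off the coefficient of $r^{-k}$ in the integral equation and differentiating in $t$ then gives a triangular system of linear ODEs for the $a_k(t)$ with right-hand sides again valued in harmonic polynomials of degree $k-d+2$, whence $a_k\in C^1\big([0,T],C(\s^{d-1},\R^d)\big)$.

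With (i)--(iii) available the rest is standard. Combining the smoothing bound and the bilinear bound yields, on $X_T:=C\big([0,T],\aW^{m,p}_\delta\big)$ (case (a)) resp.\ $C\big([0,T],\aA^{m,p}_\delta\big)$ (case (b)) — if convenient with the auxiliary norm $\sup_{0<t\le T}t^{1/2}\|u(t)\|_{W^{m+1,p}_\delta}$ to absorb the single derivative lost in the nonlinearity — estimates $\|\Phi(u_0,u)\|_{X_T}\le C\|u_0\|+CT^{1/2}\|u\|_{X_T}^2$ and $\|\Phi(u_0,u)-\Phi(u_0,v)\|_{X_T}\le CT^{1/2}\big(\|u\|_{X_T}+\|v\|_{X_T}\big)\|u-v\|_{X_T}$, so that for any $\rho>0$ there is $T=T(\rho)>0$ making $\Phi(u_0,\cdot)$ a contraction of the ball of radius $2C\rho$ for every $u_0\in B_{\aW^{m,p}_\delta}(\rho)$; the Banach fixed point theorem gives the unique mild solution $u$. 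Substituting $u$ into $u_t=\nu\Delta u-\mathbb P\,\Div(u\otimes u)$ places $u_t$ in $W^{m-2,p}_\delta$ (case (a)), resp.\ gives $f_t\in W^{m-2,p}_\delta$ and $\dot a_k\in C(\s^{d-1},\R^d)$ (case (b)), yielding the stated $C^1$-in-time regularity. Lipschitz dependence on $u_0$ is immediate from the uniform contraction and the affine dependence of $\Phi(\,\cdot\,,u)$ on $u_0$. Finally, the map $(t,u_0,u)\mapsto u-\Phi(u_0,u)(t)$ is real-analytic on $(0,T)\times B_{\aW^{m,p}_\delta}(\rho)\times(\text{ball})$ — it is quadratic in $u$, linear in $u_0$, and $\tau\mapsto e^{\nu\tau\Delta}$ is holomorphic for $\real\,\tau>0$ — and its $u$-differential is $\Id-D_u\Phi$, invertible (Neumann series) since $\Phi$ is a contraction; the analytic implicit function theorem in Banach spaces then makes $(t,u_0)\mapsto u(t;u_0)$ analytic on $(0,T)\times B_{\aW^{m,p}_\delta}(\rho)$, and composing with the bounded linear projections of $\aA^{m,p}_\delta$ onto $f$ and onto the coefficients $a_k$ gives the analyticity claimed in case (b).

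The genuine difficulty is step (iii): pinning down how $\mathbb P$ — a zeroth-order operator built from the nonlocal $\Delta^{-1}$ — acts on the weighted and asymptotic spaces, proving its bilinear (hence analytic) boundedness there together with the moment-vanishing that pushes the nonlocal tail down to order $r^{-(d+1)}$, and verifying that the asymptotic ansatz \eqref{eq:solution(b)} is preserved by $\Phi$; the dissipative semigroup contributes mainly the smoothing that closes the bilinear estimate (so that no derivative loss remains) and the holomorphy that feeds the analyticity, while the parabolic fixed-point scheme and the implicit-function-theorem argument for analytic dependence are routine once the mapping properties of $\mathbb P$, $e^{\nu t\Delta}$ and the pointwise product on $W^{m,p}_\delta$ and $\A^{m,p}_\delta$ are in hand. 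Two minor points requiring separate attention are the strong continuity of $e^{\nu t\Delta}$ at $t=0$ on the weighted and asymptotic spaces and the treatment of $d=2$, where the fundamental solution of $\Delta$ is logarithmic.
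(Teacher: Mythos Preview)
Your approach is sound in outline but takes a genuinely different route from the paper. You propose a direct parabolic fixed-point argument in the target space via the mild formulation with the heat semigroup and the Leray projection. The paper instead first invokes the already-established well-posedness in the larger asymptotic space $\aA^{m,p}_{N;0}$ (\cite[Theorem 1.1]{McOwenTopalov5}), and then shows the solution actually lives in the smaller space by a \emph{vorticity} argument: using the Lagrangian flow $\varphi(t)$ generated by $u$, the identity $\varphi(t)^*\omega(t)=\omega_0+\nu\int_0^t\varphi(s)^*(\Delta\omega(s))\,ds$ (Proposition \ref{prop:conservation_law}) forces $\omega(t)\in W^{m-1,p}_{\delta+1}$ for all $t$ (Proposition \ref{prop:preservation_of_weight_W}); the velocity is then recovered via Biot--Savart $u=\Delta^{-1}\Div\omega$, and the vanishing of the three lowest potential coefficients $a_{d-2},a_{d-1},a_d$ is established by moment identities equivalent to yours (formulas \eqref{eq:the_fourier_coefficient}--\eqref{eq:Fourier_coefficient_relation}). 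Your multipole analysis of $\mathbb P$ captures the same cancellation directly on the velocity side. The paper's route is short because the heavy analytic lifting (semigroup on weighted/asymptotic spaces, fixed point, analyticity) is outsourced to \cite{McOwenTopalov5}; yours is more self-contained but requires building that infrastructure here.

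One technical caveat: the smoothing estimate $\|e^{\nu\tau\Delta}g\|_{W^{m,p}_\delta}\lesssim\tau^{-1/2}\|g\|_{W^{m-1,p}_\delta}$ with the \emph{same} weight $\delta$ on both sides is not correct as stated for these spaces with weights increasing with the order of the derivative. The domain of $\Delta$ as an analytic-semigroup generator on $W^{m-1,p}_\delta$ is $\widetilde W^{m+1,p}_\delta=W^{m-1,p}_\delta\cap W^{m,p}_{\delta-1}\cap W^{m+1,p}_{\delta-2}$ (see \cite[Theorem 2.2, Lemma B.7]{McOwenTopalov5} and the proof of Proposition \ref{prop:smoothingW} here), so parabolic smoothing trades regularity for weight. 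Your scheme can absorb this because the nonlinearity itself gains weight --- the local part $u\cdot\nabla u$ lies in $W^{m-1,p}_{\delta+1}$ whenever $\sigma\ge 0$, and the nonlocal part $\nabla\Delta^{-1}Q(u)$ lands in $W^{m,p}_\delta$ without derivative loss --- but the bookkeeping in your contraction estimate needs to be adjusted accordingly.
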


Theorem \ref{th:NS_W-spaces} is proven in Section \ref{sec:results_W-spaces}.
The result shows that for $m>2+\frac{d}{p}$ the Navier-Stokes equation is locally well-posed
in the weighted Sobolev space $W^{m,p}_\delta$ for $0\le\delta+\frac{d}{p}<d+1$. 
For weights $\delta+\frac{d}{p}\ge d+1$ the space $W^{m,p}_\delta$ is no longer preserved
by the Navier-Stokes flow and (generically) the solutions \eqref{eq:solution(b)} develop 
(cf. Proposition \ref{prop:asymptotics} below) asymptotic terms as $|x|\to\infty$ of the form 
\begin{equation}\label{eq:asymptotic_terms}
\sum\limits_{d+1\le k\le\delta+\frac{d}{p}; k\in\Z}\chi(r)\,\frac{a_k(t,\theta)}{r^k}
\end{equation}
where the components of the asymptotic functions $a_k(t) : \s^{d-1}\to\R$, $d+1\le k\le\delta+\frac{d}{p}$, $k\in\Z$,
are eigenfunctions of the Laplace-Beltrami operator (corresponding to the restriction
of the Euclidean metric on $\R^d$ to $\s^{d-1}$) with eigenvalue $\lambda_k:=k(k-d+2)$ 
(cf., e.g., \cite[Theorem 22.1]{Shubin}).
Note that for $t\in[0,T]$ the term $f(t,x)$ in \eqref{eq:solution(b)} considered as a function of $x\in\R^d$ 
belongs to $W^{m,p}_\delta$ and hence, by \eqref{eq:W-estimate*}, 
it is of order $o\big(1/r^{\delta+\frac{d}{p}}\big)$ as $|x|\to\infty$.
This confirms that the term $f(t,x)$ in \eqref{eq:solution(b)} is a remainder and that the
solution of the Navier-Stokes equation has a spatial asymptotic expansion as $|x|\to\infty$
with asymptotic terms \eqref{eq:asymptotic_terms}. 
Moreover, it follows from Theorem \ref{th:NS_W-spaces}, the estimate \eqref{eq:W-estimate}, 
and the compactness of the interval $[0,T]$, that for any $u_0\in W^{m,p}_\delta$ there exists 
a constant $C_\delta>0$ such that for any $t\in[0,T]$ and $|x|\ge 2$ we have that 
\[
\Big|\partial^\alpha\Big(u(t,x)-
\sum\limits_{d+1\le k\le N-1; k\in\Z}\frac{a_k(t,\theta)}{|x|^k}\Big)\Big|\le
\frac{C_\delta}{|x|^{N+|\alpha|}},\quad 0\le|\alpha|\le L,
\]
where $L\in\Z_{\ge 0}$ is the largest integer such that $0\le L<m-\frac{d}{p}$ and 
$N:=\floor{\delta+\frac{d}{p}}$ with $\floor{x}$ denoting
the largest integer that is less than or equal to $x\in\R$.
This estimate is locally uniform on the choice of the initial data $u_0\in W^{m,p}_\delta$.
Hence, up to an error of order $O\big(1/|x|^{N+|\alpha|}\big)$ the solution $u(t,x)$ of the Navier-Stokes equation
(together with its derivatives of order $|\alpha|\le l$) is represented by its asymptotic part uniformly in
$t\in[0,T]$ and $|x|\ge 2$ and locally uniformly on the initial data. 
In addition, the solution depends analytically on the time and the initial data as described in 
Theorem \ref{th:NS_W-spaces}.
This analytic dependence is in contrast with the analogous result for the solutions of the Euler equation in 
\cite[Theorem 1.2]{McOwenTopalov5}, where the asymptotic functions
$a_k\in C(\s^{d-1},\R^d)$, $d+1\le k\le\delta+\frac{d}{p}$, $k\in\Z$,
are analytic in the time and the initial data, but the remainder 
$f\in C\big([0,T],W^{m,p}_\delta\big)\cap C^1\big([0,T],W^{m-2,p}_\delta\big)$ is not.
In a separate work, we consider the case of the existence of unbounded solutions of the Navier-Stokes
equation in the weighted Sobolev space $W^{m,p}_\delta$ with $-1/2<\delta+\frac{d}{p}<0$.
Such solutions have growth of order $O(|x|^\beta)$ as $|x|\to\infty$ with weight $0<\beta<1/2$.
Note that the appearance of a leading asymptotic term of order $O(1/r^{d+1})$ in the solutions of the 
Navier-Stokes equation for fast decaying initial data is well known (see, e.g., \cite{DS,BM}). 
However, these works do not discuss the well-posedness of the solutions, their dependence on the time and the initial data,
or the existence and the explicit form of the asymptotic terms. 
We will discuss the related work in more detail at the end of this Introduction.

\medskip

The following Proposition shows that the asymptotic terms \eqref{eq:asymptotic_terms} in the solutions 
of the Navier-Stokes equation for $\delta+\frac{d}{p}\ge d+2$ cannot be avoided. 

\begin{Prop}\label{prop:asymptotics}
Assume that $m>2+\frac{d}{p}$ and $\delta+\frac{d}{p}\ge d+1$. Then there exists an open dense set
$\mathcal{N}$ in $\aW^{m,p}_\delta$ such that for any $u_0\in\mathcal{N}$ and for any
$d+1\le k\le\delta+\frac{d}{p}$ and $1\le j\le d$ the $j$-th component of the $k$-th asymptotic
function $a_k(t)\in C(\s^{d-1},\R^d)$ of the solution \eqref{eq:solution(b)} does {\em not} vanish as an element of
$C(\s^{d-1},\R)$ for all but finitely many $t\in[0,T]$.
\end{Prop}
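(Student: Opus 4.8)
For $u_0\in\aW^{m,p}_\delta$ I would work with the quantity $g_0=g_0(u_0):=\Div\Div(u_0\otimes u_0)=\sum_{i,l}\partial_iu_0^l\,\partial_lu_0^i$ and the associated (decaying) pressure $\p_0=\p_0(u_0)=(-\Delta)^{-1}g_0$. By \eqref{eq:W-product} and \eqref{eq:W-estimate} one has $g_0\in W^{m-2,p}_{2\delta+2+\frac{d}{p}}$, which decays fast enough that $\p_0$ (a multiple of $E*g_0$, with $E$ the Newtonian potential) has a convergent far-field multipole expansion to all the orders we need, with moments $m_\gamma(u_0):=\int y^\gamma g_0\,dy=\sum_{i,l}\int\!\big(\partial_i\partial_l y^\gamma\big)u_0^iu_0^l\,dy$ that vanish for $|\gamma|\le1$. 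For each integer $k$ with $d+1\le k\le\delta+\frac{d}{p}$ set $s:=k-d+1\geq 2$; the term of the expansion with $|\gamma|=s$ is harmonic and homogeneous of degree $-(k-1)$, i.e. equals $P_k(\theta;u_0)\,r^{-(k-1)}$ with $P_k(\,\cdot\,;u_0)\in C(\s^{d-1},\R)$ a restriction of a degree-$s$ harmonic polynomial, and $u_0\mapsto P_k(\,\cdot\,;u_0)$ is a continuous quadratic map. The plan is to take
\[
\mathcal N:=\Big\{u_0\in\aW^{m,p}_\delta:\ P_k(\,\cdot\,;u_0)\ne0\ \text{for every integer }k\in\big[d+1,\ \delta+\tfrac{d}{p}\big]\Big\}.
\]

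Granting for the moment that each $P_k\not\equiv0$, here is why $\mathcal N$ works. First I would differentiate \eqref{eq:solution(b)} at $t=0$ — legitimate since $f\in C^1$ into $W^{m-2,p}_\delta$ and $a_k\in C^1$ into $C(\s^{d-1},\R^d)$ — and use \eqref{eq:NS} to get $\sum_k\chi(r)\,r^{-k}\,\partial_ta_k(0;u_0)+\partial_tf(0;u_0)=\nu\Delta u_0-u_0\cdot\nabla u_0-\nabla\p_0$. By \eqref{eq:W-product}, $u_0\cdot\nabla u_0\in W^{m-1,p}_{2\delta+1+\frac{d}{p}}\hookrightarrow W^{m-1,p}_\delta$ and $\Delta u_0\in W^{m-2,p}_{\delta+2}\hookrightarrow W^{m-2,p}_\delta$, so these terms carry no asymptotic coefficients of orders $d+1,\dots,\floor{\delta+\frac{d}{p}}$; comparing asymptotic parts (the order-$k$ term of $-\nabla\p_0$ being $-\nabla$ of the order-$(k-1)$ term $P_k(\theta;u_0)\,r^{-(k-1)}$ of $\p_0$) yields $\partial_ta_k^j(0;u_0)=-\,\partial_{x_j}\!\big(P_k(\theta;u_0)\,r^{-(k-1)}\big)\big|_{\s^{d-1}}$. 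The elementary remark I would use here is that a nonzero function which is harmonic on $\R^d\setminus\{0\}$, homogeneous of negative degree and smooth on $\s^{d-1}$ cannot be independent of any coordinate $x_j$ — otherwise its restriction to the sphere would blow up at $\pm e_j$. Hence $P_k(\,\cdot\,;u_0)\ne0$ forces $\partial_ta_k^j(0;u_0)\ne0$ for every $j$, and then $a_k^j(t;u_0)=t\,\partial_ta_k^j(0;u_0)+o(t)$ in $C(\s^{d-1},\R)$ shows that $a_k^j(\,\cdot\,;u_0)$ is nonvanishing for small $t>0$; being real-analytic in $t$ on $(0,T)$ by Theorem \ref{th:NS_W-spaces} and, since the solution continues past $T$, analytic near $t=T$ as well, it then has only finitely many zeros in $[0,T]$. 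Finally $\mathcal N$ is open (each $\{P_k(\,\cdot\,;u_0)=0\}$ is closed by continuity of $u_0\mapsto P_k(\,\cdot\,;u_0)$) and, once each $P_k\not\equiv0$, dense, because the zero set of a nonzero continuous quadratic map on a Banach space has empty interior (polarization) and a finite union of closed nowhere dense sets is nowhere dense.

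So the whole argument reduces to showing that, for each integer $k\in[d+1,\delta+\frac{d}{p}]$, the quadratic map $u_0\mapsto P_k(\,\cdot\,;u_0)$ is not identically zero — equivalently, that there is a divergence-free $u_0$ whose moment vector $\big(m_\gamma(u_0)\big)_{|\gamma|=s}$ is not annihilated by the linear map $\big(m_\gamma\big)_{|\gamma|=s}\mapsto\sum_{|\gamma|=s}\tfrac{(-1)^s}{\gamma!}\,\partial^\gamma E\cdot m_\gamma$. This linear map is not identically zero — its range is the span of the solid harmonics $\{\partial^\gamma E:|\gamma|=s\}$, which is isomorphic to the space of degree-$s$ harmonic polynomials — so its kernel $K_s$ is a proper subspace. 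To escape $K_s$ I would take $u_0=\sum_{n=1}^N\tau_{y_n}\phi_n$, a superposition of translates by well-separated points $y_n$ of small-scale divergence-free bumps $\phi_n$ with pairwise disjoint supports; then $g_0(u_0)=\sum_n\tau_{y_n}g_0(\phi_n)$ and $m_\gamma(u_0)=\sum_n\sum_{2\le|\beta|\le|\gamma|}\binom{\gamma}{\beta}\,y_n^{\gamma-\beta}\,\mu_\beta(\phi_n)$ with $\mu_\beta(\phi_n)=\sum_{i,l}\int(\partial_i\partial_l z^\beta)\phi_n^i\phi_n^l\,dz$. Shrinking the supports makes $\mu_\beta(\phi_n)$ dominated by its $|\beta|=2$ part $\mu_{e_p+e_q}(\phi_n)=2\int\phi_n^p\phi_n^q$, whose entries form a positive semidefinite matrix that is otherwise unconstrained; since the monomials $\{y^{\gamma-\beta}\}$ span (as the $y_n$ vary) and positive semidefinite matrices span all symmetric matrices, letting the centres $y_n$ and the profiles $\phi_n$ vary makes the leading part of $\big(m_\gamma(u_0)\big)_{|\gamma|=s}$ sweep out a full-dimensional convex cone, which is not contained in $K_s$. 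This produces the required $u_0$ and finishes the proof.

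The step I expect to be the real obstacle is exactly this last one: showing that the leading ("top") multipole of the pressure at each order $k$ is genuinely excited by the quadratic nonlinearity $\Div\Div(u_0\otimes u_0)$, i.e. that $u_0\mapsto P_k(\,\cdot\,;u_0)$ does not vanish identically; the separating-superposition device is the cleanest route, but some care is needed to control the error terms and handle the positive-semidefiniteness constraint. The remaining ingredients — matching $\partial_ta_k(0;u_0)$ with a term of the multipole expansion of $\nabla\p_0$ (which comes down to estimating the Taylor remainder of $E(x-y)$ in the weighted norms), and the short continuation of the solution past $t=T$ used in the endpoint analysis — are routine given the estimates already developed in the paper.
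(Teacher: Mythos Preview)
Your strategy coincides with the paper's: the proof there is declared identical to that of \cite[Proposition~1.1]{McOwenTopalov4}, resting on \cite[Lemma~5.1]{McOwenTopalov4}, and from the surrounding formulas \eqref{eq:a-derivative}--\eqref{eq:Fourier_coefficient_relation} one sees that the scheme is exactly to compute $\partial_t a_k(0;u_0)$ as the order-$k$ asymptotic coefficient of $\Delta^{-1}\nabla Q(u_0)=-\nabla\p_0$, show this continuous quadratic functional of $u_0$ is not identically zero, and then invoke the analyticity in $t$ furnished by Theorem~\ref{th:NS_W-spaces}. Your multipole expansion of $\p_0=E*g_0$ and the paper's Fourier-coefficient formula \eqref{eq:the_fourier_coefficient} are two descriptions of the same object, so the matching $\partial_t a_k^j(0;u_0)=-\partial_{x_j}\big(P_k(\theta;u_0)r^{-(k-1)}\big)\big|_{\s^{d-1}}$ and the ensuing open--dense and finitely-many-zeros arguments are the intended ones.

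The only substantive step you supply beyond what is written in the present paper is the non-triviality of $u_0\mapsto P_k(\,\cdot\,;u_0)$, which is precisely the content of the cited Lemma~5.1. Your translated-superposition device works, but a single localized bump already suffices and is cleaner: for a compactly supported divergence-free $\phi$ centered near $y_0$ one has, to leading order as the support shrinks,
\[
\int_{\R^d} H\,Q(\tau_{y_0}\phi)\,dy \;\approx\; \sum_{i,l}(\partial_i\partial_l H)(y_0)\int\phi^i\phi^l\,.
\]
As $H$ ranges over harmonic polynomials of degree $s=k-d+1\ge 2$ and $y_0$ over $\R^d$, the Hessian $(\partial_i\partial_l H)(y_0)$ realizes every traceless symmetric matrix, while $\int\phi\otimes\phi$ can be any rank-one positive semidefinite matrix; these pairings are not all zero, which gives $P_k\not\equiv 0$ directly and avoids the bookkeeping of multiple centers and the PSD cone argument.
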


Theorem \ref{th:NS_W-spaces} and Proposition \ref{prop:asymptotics} above show that (generically) the solutions
of the Navier-Stokes equation develop asymptotic terms as $|x|\to\infty$.
These (generically) non-vanishing asymptotic terms are unavoidable obstructions for the invariance of the weighted 
Sobolev space $W^{m,p}_\delta$ for weights $\delta+\frac{d}{p}\ge d+1$. This observation suggest the introduction
of a larger function space that contains functions of the form \eqref{eq:solution(b)} and that is preserved by
the solutions of the equation. Such asymptotic spaces were introduced in \cite[Appendix B]{McOwenTopalov2} 
(cf. also \cite[Appendix C]{SultanTopalov}).
Note that Theorem \ref{th:NS_W-spaces} follows from the results in \cite{McOwenTopalov5} on the solutions
of the Navier-Stokes equation in asymptotic spaces and Proposition \ref{prop:preservation_of_weight_W} proven 
in Section \ref{sec:conservation_law}. 
In order to formulate these results, we proceed to the definition of the {\em asymptotic spaces}. 
To this end, we choose $m>\frac{d}{p}$, $N\in\Z_{\ge 0}$, and set
\begin{equation}\label{def:gamma_N}
\gamma_N:=N+\gamma_0\ \hbox{where $\gamma_0$ is fixed and chosen so that}\ 0\le\gamma_0+\frac{d}{p}<1\, .
\end{equation}
As above, let $\chi\in C^\infty(\R)$ be a cut-off function such that $\chi(\rho)=0$ for 
$|\rho|\le 1$ and $\chi(\rho)=1$ for $|\rho|\ge 2$. 
For integer $0\le n\le N$ and $\ell\geq -n$ define 
$\A^{m,p}_{n,N;\ell}\equiv\A^{m,p}_{n,N;\ell}(\R^d)$ to be the space of real-valued functions of the form 
\begin{equation}\label{AWlog-expansion1} 
v(x)=a(x)+f(x),
\end{equation}
where
\begin{equation}\label{AWlog-expansion2} 
a(x):=\chi(r)\left(\frac{a^0_n(\theta)+\cdots a_n^{n+\ell}(\theta) (\log r)^{n+\ell}}{r^n}+\cdots + 
\frac{a^0_{N}(\theta)+\cdots+a_N^{N+\ell}(\theta) (\log r)^{N+\ell}}{r^{N}}\right),
\end{equation}
$a^j_k\in H^{m+1+N-k,p}(\s^{d-1})$ for $n\le k\le N$, $0\le j\le k+\ell$, and 
\begin{equation}\label{AW-expansion3} 
f\in W^{m,p}_{\gamma_N}\,.
\end{equation}
The function \eqref{AWlog-expansion2} in the decomposition \eqref{AWlog-expansion1} is the
{\em asymptotic part of $v$} and \eqref{AW-expansion3} is the {\em remainder of $v$}.
Since $f\in W^{m,p}_{\gamma_N}$ and $m>\frac{d}{p}$, it follows from
\eqref{eq:W-estimate*} that $f=o\big(1/r^N\big)$ as $|x|\to\infty$. 
The function space $\A^{m,p}_{n,N;\ell}$ is a Banach space
under the norm
\begin{equation}\label{def:AW;-norm}
\|v\|_{\A_{n,N;\ell}^{m,p}}:=\sum_{k=n}^N \sum_{j=0}^{k+\ell} \|a^j_k\|_{H^{m+1+N-k,p}}+
\|f\|_{W_{\gamma_N}^{m,p}}.
\end{equation}
For $n=0$ we write $\A^{m,p}_{N;\ell}$ instead of $\A^{m,p}_{0,N;\ell}$. We are particularly interested in
the case $\ell=0$, which was used in our results on the Euler and the Navier-Stokes equations
(see \cite{McOwenTopalov3,McOwenTopalov5}).
In Appendix \ref{ap:A-spaces} we collect some of the properties of the asymptotic spaces used in this paper.
In particular, note that $\A^{m,p}_{n,N;\ell}$ is a Banach algebra with respect to the pointwise multiplication 
of functions for $-n\le\ell\le 0$  (see Proposition \ref{prop:A-products}(ii)).
For simplicity of notation, we will use (whenever possible) the symbols.
$\A^{m,p}_{n,N;\ell}$, $W^{m,p}_\delta$, etc., independently of whether we are considering single-valued functions, 
vector fields, or tensor fields whose elements belong to the considered space of functions.
We will also need the space of divergence free vector fields on $\R^d$,
$\aA_{n,N;\ell}^{m,p}:=\big\{v\in\A_{n,N;\ell}^{m,p}\,\big|\,\Div v=0\big\}$.
Note that $W^{m,p}_{\gamma_N}$ is the remainder space in $A_{n,N;\ell}^{m,p}$ and hence
$W^{m,p}_{\gamma_N}\subseteq A_{n,N;\ell}^{m,p}$.
The following Proposition is proven in Section \ref{sec:conservation_law}. The Proposition follows from the analog
of the vorticity conservation for the Navier-Stokes equation formulated in Proposition \ref{prop:conservation_law} 
and from Corollary \ref{coro:preservation_of_weight_A} on the preservation of the leading power in 
the asymptotic expansion of the vorticity $\omega$ of $u$.


\begin{Prop}\label{prop:preservation_of_weight_W}
Assume that $m>4+\frac{d}{p}$ and let 
$u\in C\big([0,T],\aA^{m,p}_{N;0}\big)\cap C^1\big([0,T],\aA^{m-2,p}_{N;0}\big)$, $T>0$, 
be a solution of the Navier-Stokes equation \eqref{eq:NS} such that the vorticity at time zero
$\omega_0$ belongs to $W^{m-1,p}_{\delta+1}$ for some $0\le\delta+\frac{d}{p}\le\gamma_N+\frac{d}{p}$. 
Then, $\omega\in C\big([0,T],W^{m-1,p}_{\delta+1}\big)\cap  C^1\big([0,T],W^{m-3,p}_{\delta+1}\big)$.
\end{Prop}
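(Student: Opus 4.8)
\medskip\noindent\emph{Idea of the proof.}
The plan is to transfer the claim to the vorticity equation, where the pressure is absent and the linear transport--stretching--diffusion structure makes the propagation of spatial decay transparent. First I would invoke Proposition~\ref{prop:conservation_law}: since $u\in C\big([0,T],\aA^{m,p}_{N;0}\big)\cap C^1\big([0,T],\aA^{m-2,p}_{N;0}\big)$ solves \eqref{eq:NS}, its vorticity $\omega$ satisfies a linear equation of the form $\partial_t\omega=\nu\Delta\omega-u\cdot\nabla\omega+Q(\nabla u,\omega)$, in which $Q$ is bilinear and no pressure term occurs. Since taking the vorticity costs one derivative and raises the leading order of an asymptotic expansion by one (see Appendix~\ref{ap:A-spaces}), we have $\omega\in C\big([0,T],\aA^{m-1,p}_{1,N+1;0}\big)\cap C^1\big([0,T],\aA^{m-3,p}_{1,N+1;0}\big)$; write its asymptotic part as $\chi(r)\sum_{1\le k\le N+1}a_k(t,\theta)/r^{k}$ with remainder $g(t)\in W^{m-1,p}_{\gamma_{N+1}}$, where $\gamma_{N+1}=\gamma_N+1$.

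Next I would read off from the hypothesis $\omega_0\in W^{m-1,p}_{\delta+1}$ which of the coefficients $a_k(0,\cdot)$ are forced to vanish. If $a_j(0,\cdot)\not\equiv 0$ for some $j\le\delta+1+\frac{d}{p}$, then, evaluating $\omega_0$ along a ray $x=r\theta_*$ with $a_j(0,\theta_*)\ne 0$ and using \eqref{eq:W-estimate*} for the remainder $g(0)$, one finds $|x|^{\delta+1+\frac{d}{p}}\,|\omega_0(x)|\not\to 0$ as $|x|\to\infty$, contradicting \eqref{eq:W-estimate*} applied to $\omega_0$ itself. Hence $a_1(0)=\dots=a_{k_0}(0)\equiv 0$ with $k_0:=\floor{\delta+1+\frac{d}{p}}$; equivalently (a property of the asymptotic spaces, cf.\ Appendix~\ref{ap:A-spaces}), membership of an element of $\aA^{m-1,p}_{1,N+1;0}$ in $W^{m-1,p}_{\delta+1}$ amounts to the vanishing of its asymptotic coefficients up to order $k_0$. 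Note that $1\le k_0\le N+1$, since $0\le\delta+\frac{d}{p}\le\gamma_N+\frac{d}{p}<N+1$.

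I would then apply Corollary~\ref{coro:preservation_of_weight_A} on the preservation of the leading order of the asymptotic expansion of $\omega$, which yields $a_1(t)=\dots=a_{k_0}(t)\equiv 0$ for all $t\in[0,T]$, so that $\omega(t)=\chi(r)\sum_{k_0+1\le k\le N+1}a_k(t,\theta)/r^{k}+g(t)$. Each summand lies in $W^{m-1,p}_{\delta+1}$ because $k\ge k_0+1>\delta+1+\frac{d}{p}$ (the angular regularity of $a_k(t)$ comfortably exceeds what finitely many $x$-derivatives require), and $W^{m-1,p}_{\gamma_{N+1}}\subseteq W^{m-1,p}_{\delta+1}$ since $\gamma_{N+1}\ge\delta+1$; continuity in $t$ of the coefficients $a_k$ and of $g$, inherited from $\omega\in C\big([0,T],\aA^{m-1,p}_{1,N+1;0}\big)$, then gives $\omega\in C\big([0,T],W^{m-1,p}_{\delta+1}\big)$. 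For the $C^1$ assertion I would substitute this back into the vorticity equation: $\Delta\omega\in C\big([0,T],W^{m-3,p}_{\delta+1}\big)$, while the bilinear terms $u\cdot\nabla\omega$ and $Q(\nabla u,\omega)$ belong to $C\big([0,T],W^{m-3,p}_{\delta+1}\big)$ by the product estimates \eqref{eq:W-product} together with Proposition~\ref{prop:A-products} (using $m-3>\frac{d}{p}$, that $u$ is bounded with decaying derivatives, and that $\nabla u$ already decays like $1/r$); since $\omega\in C^1\big([0,T],\aA^{m-3,p}_{1,N+1;0}\big)$, integrating $\partial_t\omega$ in time then yields $\omega\in C^1\big([0,T],W^{m-3,p}_{\delta+1}\big)$.

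The substantive input is carried entirely by the two cited results---Proposition~\ref{prop:conservation_law}, which eliminates the pressure and delivers a genuinely linear equation for $\omega$, and Corollary~\ref{coro:preservation_of_weight_A}, which provides the dynamical preservation of the leading asymptotic order---so the remaining work, and the main point requiring care, is the dictionary between membership in $W^{m-1,p}_{\delta+1}$ and the vanishing of the first $k_0$ asymptotic coefficients (including the borderline case in which $\delta+\frac{d}{p}$ is an integer), together with the product estimates needed to place the bilinear terms in $W^{m-3,p}_{\delta+1}$ for the $C^1$ conclusion.
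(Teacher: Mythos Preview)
Your strategy is close to the paper's and works for most values of $\delta$, but there is a genuine gap at the top of the range. When $N\le\delta+\tfrac{d}{p}\le\gamma_N+\tfrac{d}{p}$ you have $k_0=\lfloor\delta+1+\tfrac{d}{p}\rfloor=N+1$, and Corollary~\ref{coro:preservation_of_weight_A} can only be invoked with $n\le N+1$; at best it gives $\omega(t)\in\A^{m-1,p}_{N+1,N+1;-1}$, which still allows a nonzero coefficient $a_{N+1}(t)$. Your assertion ``which yields $a_1(t)=\dots=a_{k_0}(t)\equiv 0$'' is therefore unjustified in this case, and the ensuing decomposition $\omega(t)=\chi(r)\sum_{k_0+1\le k\le N+1}a_k(t,\theta)/r^{k}+g(t)$ becomes an empty sum that silently drops the surviving term at order $N+1$. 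Note also that Proposition~\ref{prop:conservation_law} is not the PDE form of the vorticity equation you quote (that is \eqref{eq:NS_vorticity_form}); it is the Lagrangian integral identity \eqref{eq:conserved_quantity}.

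The paper closes exactly this gap by first passing to $N':=\lfloor\delta+\tfrac{d}{p}\rfloor$ with $\gamma_{N'}:=\delta$, applying Corollary~\ref{coro:preservation_of_weight_A} with $n=N'+1$, and then using the integral identity \eqref{eq:conserved_quantity}: since $\omega\in\A^{m-1,p}_{N'+1,N'+1;0}$ one has $\Delta\omega(s)\in\A^{m-3,p}_{N'+3,N'+3;-2}$, and after pull-back the integrand lies in $\A^{m-3,p}_{N'+2,N'+2;-2}\subseteq W^{m-3,p}_{\delta+1}$, so the right-hand side of \eqref{eq:conserved_quantity} has no asymptotic term at order $N'+1$ and hence neither does $\omega(t)$. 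Your argument can be repaired either in this way, or---closer to what you already set up---by reading off directly from \eqref{eq:NS_vorticity_form} that once $\omega\in\A^{m-1,p}_{N+1,N+1;-1}$ each of $\nu\Delta\omega$, $u\cdot\nabla\omega$ and the stretching term has leading order at least $N+2$, whence $\partial_t a_{N+1}\equiv 0$ and $a_{N+1}(t)=a_{N+1}(0)=0$.
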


\noindent As mentioned above, Theorem \ref{th:NS_W-spaces} follows from Proposition \ref{prop:preservation_of_weight_W} 
and \cite[Theorem 1.1]{McOwenTopalov5} on the existence of solutions of the Navier-Stokes equation in 
asymptotic spaces (cf. Section \ref{sec:results_W-spaces} for the proof).

In Section \ref{sec:spatial_smoothing} we discuss a {\em spatial smoothing} property of the solutions
of the Navier-Stokes equations. In particular, we prove the following Proposition.

\begin{Prop}\label{prop:smoothingW}
Assume that $m>2+\frac{d}{p}$ and let $0\le\delta+\frac{d}{p}<d+1$.
Take $u_0\in\aW^{m,p}_\delta$ and let
$u\in C\big([0,T_\infty),\aW^{m,p}_\delta\big)\cap C^1\big([0,T_\infty),\aW^{m-2,p}_\delta\big)$
be the solution of the Navier-Stokes equation with initial data $u_0\in\aW^{m,p}_\delta$ given
by Theorem \ref{th:NS_W-spaces} on its maximal interval of existence $T_\infty>0$.
Then $u\in C^\infty\big((0,T_\infty),\aW^{m+j,p}_{\delta-2j}\big)$
for any $\delta-2j+\frac{d}{p}\ge 0$.
\end{Prop}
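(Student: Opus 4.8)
The plan is to convert the parabolic smoothing into weighted elliptic regularity, using the real-analyticity in time supplied by Theorem~\ref{th:NS_W-spaces} as the engine. First I would record that $t\mapsto u(t)$ is real-analytic, in particular $C^\infty$, from $(0,T_\infty)$ into $\aW^{m,p}_\delta$: locally this is the analyticity assertion of Theorem~\ref{th:NS_W-spaces}, and one extends it to the maximal interval by a standard covering argument using uniqueness of the solution and the time-translation invariance of \eqref{eq:NS}. Differentiating in $t$, every $\partial_t^\ell u$ is then a $C^\infty$ map $(0,T_\infty)\to\aW^{m,p}_\delta$; in particular $\partial_t u(t)\in W^{m,p}_\delta$ for each $t\in(0,T_\infty)$ — two more orders of Sobolev regularity than the a priori bound $u\in C^1\big([0,T_\infty),\aW^{m-2,p}_\delta\big)$. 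This gain is the only place the analyticity input is used.

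The heart of the proof is a bootstrap: for every integer $k\ge 0$ with $\delta-2k+\frac dp\ge 0$ one has $u\in C^\infty\big((0,T_\infty),\aW^{m+k,p}_{\delta-2k}\big)$, and the Proposition is the case $k=j$. The case $k=0$ was just established. Assuming the statement for $k$ and that $\delta-2(k+1)+\frac dp\ge 0$, I would fix $t\in(0,T_\infty)$ and rewrite \eqref{eq:NS} as the elliptic identity
\[
\nu\,\Delta u(t)=\partial_t u(t)+\mathbb{P}\big(u(t)\cdot\nabla u(t)\big),\qquad \mathbb{P}:=\id-\nabla\Delta^{-1}\Div .
\]
Since $u$ is $C^\infty$ in $t$ into $W^{m+k,p}_{\delta-2k}$ by the inductive hypothesis, $\partial_t u(t)\in W^{m+k,p}_{\delta-2k}$ and $\nabla u(t)\in W^{m+k-1,p}_{\delta-2k+1}$; as $m+k-1>\frac dp$, the multiplication estimate \eqref{eq:W-product} gives $u(t)\cdot\nabla u(t)\in W^{m+k-1,p}_{2\delta-4k+1+\frac dp}\hookrightarrow W^{m+k-1,p}_{\delta-2k}$ (the embedding because $\delta-2k+1+\frac dp\ge 0$). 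All weights occurring here, namely $\delta-2l+\frac dp$ for $0\le l\le k+1$, lie in $[0,d+1)$ since $0\le\delta-2(k+1)+\frac dp$ and $\delta+\frac dp<d+1$, and in this range $\mathbb{P}$ is bounded on the relevant weighted Sobolev spaces (cf.\ \cite{McOwenTopalov2,McOwenTopalov5} and Appendix~\ref{ap:A-spaces}); hence $\Delta u(t)\in W^{m+k-1,p}_{\delta-2k}$. Combining this with $u(t)\in W^{m+k,p}_{\delta-2k}\hookrightarrow L^p_{\delta-2k-2}$ and the weighted interior elliptic estimate for the Laplacian on $\R^d$,
\[
\|v\|_{W^{s+2,p}_{\sigma-2}}\le C\big(\|\Delta v\|_{W^{s,p}_{\sigma}}+\|v\|_{L^p_{\sigma-2}}\big)\qquad(s\in\Z_{\ge 0},\ \sigma\in\R),
\]
which holds for all weights (dyadic decomposition into annuli plus rescaled local elliptic estimates), yields $u(t)\in W^{m+k+1,p}_{\delta-2k-2}$.

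To upgrade this pointwise-in-$t$ bound to $C^\infty$ regularity, I would observe that $h(t):=\nu^{-1}\big(\partial_t u(t)+\mathbb{P}(u(t)\cdot\nabla u(t))\big)$ is the composition of $t\mapsto u(t)$ — which is $C^\infty$ into $W^{m+k,p}_{\delta-2k}$ by the inductive hypothesis — with the bounded linear operator $\partial_t$ and with the bounded quadratic map $v\mapsto\mathbb{P}(v\cdot\nabla v)$ from $W^{m+k,p}_{\delta-2k}$ to $W^{m+k-1,p}_{\delta-2k}$, so $h\in C^\infty\big((0,T_\infty),W^{m+k-1,p}_{\delta-2k}\big)$. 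Since $\Delta u(t)=h(t)$, the operator $\Delta$ sends each difference quotient in $t$ of $u$ to the corresponding difference quotient of $h$; feeding these into the elliptic estimate above (and using $u\in C^\infty\big((0,T_\infty),L^p_{\delta-2k-2}\big)$) shows that the difference quotients converge in $W^{m+k+1,p}_{\delta-2k-2}$, and iterating gives $u\in C^\infty\big((0,T_\infty),W^{m+k+1,p}_{\delta-2k-2}\big)$; as $\Div u=0$ this lands in $\aW^{m+k+1,p}_{\delta-2k-2}$, closing the induction.

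I expect the main obstacle to be the operator-theoretic input used in the second step: one must know, uniformly over all the weights $\delta-2k+\frac dp$ generated by the bootstrap (in particular at the integer endpoints of $[0,d+1)$), that the Leray projection — equivalently, the solution operator $\Delta^{-1}\Div$ recovering the pressure — and the weighted elliptic estimate for $\Delta$ behave as required. This is exactly the place where the standing hypothesis $0\le\delta+\frac dp<d+1$ of case (a), which keeps these weights in the non-exceptional range $[0,d+1)$, is essential, and it is the same mechanism responsible for the invariance of $W^{m,p}_\delta$ in Theorem~\ref{th:NS_W-spaces}(a); alternatively one can bypass $\mathbb{P}$ by running the bootstrap on the vorticity equation, whose coefficients are as regular as $u$, using the conservation law of Section~\ref{sec:conservation_law} together with the Biot--Savart law.
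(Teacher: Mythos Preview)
Your argument is correct and reaches the same conclusion, but it takes a somewhat different route from the paper's proof. Both proofs share the two structural ingredients: (i) the time--analyticity furnished by Theorem~\ref{th:NS_W-spaces} (via \cite[Remark 4.5]{McOwenTopalov5}), which upgrades $\partial_t u$ from $W^{m-2,p}_\delta$ to $W^{m,p}_\delta$, and (ii) an inductive bootstrap trading weight for regularity one step at a time. The difference is in how the spatial gain at each step is obtained and how the $C^\infty$--in--$t$ regularity is propagated.

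The paper rewrites \eqref{eq:NS} as the inhomogeneous heat equation $u_t=\nu\Delta u+F(u)$ with $F(u)=\Delta^{-1}\nabla Q(u)-u\cdot\nabla u$ (the same object as your $-\mathbb{P}(u\cdot\nabla u)$), checks that $F : W^{m,p}_\delta\to W^{m-1,p}_\delta$ is analytic, and then invokes the analytic--semigroup machinery: \cite[Theorem 2.2]{McOwenTopalov5} says $\Delta$ generates an analytic semigroup on $W^{m-1,p}_\delta$ with domain $\widetilde{W}^{m+1,p}_\delta$, and \cite[Theorem 3.2.2]{Henry} then yields $u\in C\big((0,T_\infty),\widetilde{W}^{m+1,p}_\delta\big)$; the identification $\widetilde{W}^{m+1,p}_\delta=W^{m-1,p}_\delta\cap W^{m,p}_{\delta-1}\cap W^{m+1,p}_{\delta-2}$ from \cite[Lemma B.7]{McOwenTopalov5} gives $u(t)\in W^{m+1,p}_{\delta-2}$. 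For the $C^\infty$--in--$t$ upgrade the paper \emph{re-solves} the Navier--Stokes problem locally with initial data $u(t_0)\in\aW^{m+1,p}_{\delta-2}$, applies Theorem~\ref{th:NS_W-spaces} in the better space, and uses uniqueness plus analyticity to splice this back onto the original solution. Your route bypasses the semigroup theory entirely: you read off $\Delta u(t)\in W^{m+k-1,p}_{\delta-2k}$ directly from the equation and feed it into the scale--invariant weighted elliptic estimate for $\Delta$, then obtain $C^\infty$--in--$t$ by difference quotients rather than by re-solving the Cauchy problem. Your approach is more elementary in that it avoids both \cite{Henry} and a second appeal to the local well-posedness theory; the paper's approach is more modular, re-using Theorem~\ref{th:NS_W-spaces} and the semigroup results from \cite{McOwenTopalov5} as black boxes. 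The delicate point you flag---that $\mathbb{P}(u\cdot\nabla u)$ stays in the weighted space for all weights in $[0,d+1)$, including the integers $d-2,d-1,d$---is handled in the paper exactly where you locate it, namely by the vanishing \eqref{eq:Fourier_coefficient_relation} established in the proof of Theorem~\ref{th:NS_W-spaces}.
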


\noindent In fact, the curve $u : (0,T_\infty)\to\aW^{m+j,p}_{\delta-2j}$ is analytic.


\medskip

There are many important works related to the solutions of the
Navier-Stokes equation in various function spaces. We refer the reader to the monographs
\cite{MB,Chemin} and \cite{Kato1} for a complete list of references. Here we will concentrate
our discussion on works related to the spatial behavior of solutions on $\R^d$ as $|x|\to\infty$.
The fact that the solutions of the Euler and the Navier-Stokes equations on $\R^3$ with
initial data in the Schwartz space have decay of order $O\big(1/|x|^4\big)$ as $|x|\to\infty$
for $t>0$ small was discussed in \cite{DS}. The existence of asymptotic expansions
of solutions of the Euler equation as $|x|\to\infty$ for initial data in the weighted 
Sobolev space $W^{m,p}_\delta(\R^2)$, $\delta+\frac{2}{p}\ge 0$, $m>3+\frac{2}{p}$,
was proven in \cite[Corollary 1.2]{SultanTopalov} (cf. also \cite[Theorem 1.2]{McOwenTopalov4}).
We also mention the work \cite{Bran1}, where the existence of asymptotic expansions of solutions
was proven in the case of the 2d Euler equation for initial data with vorticity with compact support in $L^\infty_c(\R^2)$
as well as for the weak solutions of the 2d Navier-Stokes equations.
The existence of asymptotic expansions (with asymptotic terms expressed in terms of the inverse Fourier transform
of certain functions) of smooth solutions of the Navier-Stokes equation and initial data in the Schwartz space
was obtained in \cite{KR}. In general, these works do not discuss the well-posedness of solutions and their dependence
on the time and initial data.
The well-posedness of classical solutions for the Navier-Stokes equation in asymptotic spaces on $\R^d$ was proven in 
\cite[Theorem 1.1]{McOwenTopalov5}. It was shown in \cite{McOwenTopalov5} that the solutions
depend analytically on time and the initial data. 
Similar results were proven in \cite{McOwenTopalov3,McOwenTopalov4,Cantor1} for the Euler equation. 
As mentioned above, the asymptotic spaces were introduced in \cite{McOwenTopalov2} for studying
the spatial asymptotic behavior of solutions of (non-local) PDE's on $\R^d$ as $|x|\to\infty$.
The present work extends the results proven in \cite{McOwenTopalov4} in the case of the Euler equation to
the Navier-Stokes equation.
In comparison to the previous works, we obtain (local) well-possedness of classical solutions
of the Navier-Stokes equation for initial data $u_0$ in the weighted Sobolev space $W^{m,p}_\delta(\R^d)$, 
$d\ge 2$, for any weigh $\delta+\frac{d}{p}\ge 0$.
In addition, we identify the exact form of the asymptotic terms and prove that the remainder
belongs to the same weighted space $W^{m,p}_\delta$ as the initial data $u_0$. 
The solutions are analytic in time and the initial data and have a spatial smoothing property.
To the best of our knowledge, these results are new.
Finally, note that asymptotic-type spaces with a single asymptotic term of order $O(1/r)$ were introduced
and studied in dimension two: we refer to the affine space $E_m$ in \cite[Definition 1.3.3]{Chemin},
as well as to the radial-energy decomposition in \cite[Definition 3.1]{MB}.

\section{Consequences from the conserved quantity}\label{sec:conservation_law}
In this Section we derive a conserved quantity for the solutions of the Navier-Stokes equation.
The quantity is a direct analog of the vorticity conservation in the case of the Euler equation and is
used in the proof of Proposition \ref{prop:preservation_of_weight_W} at the end of the Section.

Assume that $m>4+\frac{d}{p}$ and let for some $T>0$,
\begin{equation}\label{eq:u-solution}
u\in C\big([0,T],\aA^{m,p}_{N;0}\big)\cap C^1\big([0,T],\aA^{m-2,p}_{N;0}\big),
\end{equation}
be a solution of the Navier-Stokes equation \eqref{eq:NS}.
Denote by $\A D^{m,p}_{N;0}$ the {\em group of asymptotic diffeomorphisms} on $\R^d$,
\[
\A D^{m,p}_{N;0}:=\big\{\varphi(x)=x+f(x), f : \R^d\to\R^d\,\big|\,f\in\A^{m,p}_{N;0}\,\text{ and }
\det(\dd\varphi)>0\big\}
\]
where $\dd\varphi$ denotes the Jacobi matrix of the map $\varphi : \R^d\to\R^d$.
The elements of $\A D^{m,p}_{N;0}$ are diffeomorphisms on $\R^d$ that form a topological group
with respect to the composition of maps (see Theorem \ref{th:A-composition} and 
Proposition \ref{prop:A-composition} in Appendix \ref{ap:A-spaces}).
The map $f\in\A^{m,p}_{N;0}$ appearing in the definition of $\A D^{m,p}_{N;0}$ identifies
$\A D^{m,p}_{N;0}$ with an open set in $\A^{m,p}_{N;0}$ and will be considered
as a coordinate on $\A D^{m,p}_{N;0}$.
For more details on $\A D^{m,p}_{N;0}$ we refer the reader to \cite[Appendix B]{McOwenTopalov2} and
\cite[Section 2]{McOwenTopalov3}. Let us now consider the ordinary differential equation
\begin{equation}\label{eq:ode}
\dt\varphi(t)=u(t)\circ\varphi(t),\quad\varphi|_{t=0}=\id,
\end{equation}
where the dot denotes the differentiation with respect to $t$ in $\A D^{m,p}_{N;0}$ and $\id$ is 
the identity map on $\R^d$.
It follows from Proposition 2.1 in \cite{McOwenTopalov3} that equation \eqref{eq:ode} has a unique solution 
\begin{equation}\label{eq:phi-regularity}
\varphi\in C^1\big([0,T],\A D^{m,p}_{N;0}\big)\,.
\end{equation}
Consider the vorticity 2-form $\omega:=\dd(u^\flat)$ where $\dd$ denotes the exterior differential
acting on differential forms and $u^\flat:=\sum_{j=1}^du_jdx_j$ is the 1-form obtained from the vector field $u$ by 
lowering the indexes with the help of the Euclidean metric on $\R^d$. 
In coordinates,
\begin{equation}\label{eq:omega_kl}
\omega_{kl}:=\frac{\partial u_l}{\partial x_k}-\frac{\partial u_k}{\partial x_l},\quad 1\le k,l\le d\,.
\end{equation}
One easily sees from \eqref{eq:NS} that the vorticity 2-form satisfies the equation
\begin{equation}\label{eq:NS_vorticity_form}
\omega_t+L_u\omega=\nu\Delta\omega
\end{equation}
where $L_u\omega$ denotes the Lie derivative of $\omega$ in the direction of $u$
and the Laplace operator is applied component wise. It follows from \eqref{eq:u-solution} and 
Proposition \ref{prop:A-properties} in Appendix \ref{ap:A-spaces} that
\begin{equation}\label{eq:omega-regularity}
\omega\in C\big([0,T],\A^{m-1,p}_{1,N+1;-1}\big)\cap C^1\big([0,T],\A^{m-3,p}_{1,N+1;-1}\big)\,.
\end{equation}
For $t\in[0,T]$ consider the pull-back $(\varphi^*\omega)(t):=\varphi(t)^*\omega(t)$ of the 2-form $\omega(t)$
with respect to the asymptotic diffeomorphism $\varphi(t) : \R^d\to\R^d$.
In coordinates,
\begin{equation}\label{eq:pull-back_formula}
(\varphi^*\omega)_{\alpha\beta}(t)=\sum_{k,l=1}^d\omega_{kl}(t)\circ\varphi(t)\,
\frac{\partial\varphi_k(t)}{\partial x_\alpha}\frac{\partial\varphi_l(t)}{\partial x_\beta},
\quad 1\le \alpha,\beta\le d,
\end{equation}
with $\varphi(t)=\id+f(t)$, $f\in C^1\big([0,T],\A^{m,p}_{N;0}\big)$.
It now follows from \eqref{eq:pull-back_formula}, \eqref{eq:phi-regularity}, Proposition \ref{prop:A-products}
and Proposition \ref{prop:A-composition} in Appendix \ref{ap:A-spaces}, that
\begin{equation}\label{eq:pull-back_omega-regularity}
\varphi^*\omega\in C\big([0,T],\A^{m-1,p}_{1,N+1;-1}\big)\cap C^1\big([0,T],\A^{m-3,p}_{1,N+1;-1}\big)\,.
\end{equation}
In fact, consider, for example, the term $\omega_{kl}(t)\circ\varphi(t)$, for given $1\le k,l\le d$, appearing in 
\eqref{eq:pull-back_formula} and let us prove that the map
\begin{equation}\label{eq:term*}
[0,T]\to\A^{m-3,p}_{1,N+1;-1},\quad[0,T]\mapsto\omega_{kl}(t)\circ\varphi(t), 
\end{equation} 
is $C^1$. To this end, consider the map
\begin{equation}\label{eq:G}
G : [0,T]\times[0,T]\to\A^{m-3,p}_{1,N+1;-1},\quad(t_1,t_2)\mapsto\omega_{kl}(t_1)\circ\varphi(t_2).
\end{equation}
Since by \eqref{eq:omega-regularity} the map $\omega_{kl} : [0,T]\to\A^{m-3,p}_{1,N+1;-1}$
is $C^1$, we conclude from Proposition \ref{prop:A-composition} in Appendix \ref{ap:A-spaces} that
\begin{equation}\label{eq:G1}
\frac{\partial G}{\partial t_1}(t_1,t_2)=\frac{\dd\omega_{kl}(t_1)}{\dd t_1}\circ\varphi(t_2)\,.
\end{equation}
Similarly, we have from \eqref{eq:omega-regularity} that 
$\omega_{kl}\in C\big([0,T],\A^{m-1,p}_{1,N+1;-1}\big)$, and hence
by Proposition \ref{prop:A-composition} in Appendix \ref{ap:A-spaces}, 
\begin{equation}\label{eq:G2}
\frac{\partial G}{\partial t_2}(t_1,t_2)=
\big(\dd_{\varphi(t_2)}\omega_{kl}(t_1)\big)\Big(\frac{\dd\varphi(t_2)}{\dd t_2}\Big)
\end{equation}
where 
$\dd_{\varphi(t_2)}\omega_{kl}(t_1)\in\A^{m-2,p}_{2,N+2;-2}\subseteq\A^{m-3,p}_{1,N+1;-1}$
denotes the (pointwise) differential of $\omega_{kl}(t_1) : \R^d\to\R$.
The partial derivatives \eqref{eq:G1} and \eqref{eq:G2} when considered as maps
$[0,T]\times[0,T]\to\A^{m-3,p}_{1,N+1;-1}$ are continuous in view of
\eqref{eq:phi-regularity}, Proposition \ref{prop:A-products}
and Proposition \ref{prop:A-composition} in Appendix \ref{ap:A-spaces}.
This implies that the map \eqref{eq:term*} is $C^1$ (cf. \cite[Theorem 3.4.3]{Hamilton}).
The fact that \eqref{eq:pull-back_omega-regularity} holds then follows from 
\eqref{eq:pull-back_formula}, \eqref{eq:phi-regularity}, and Proposition \ref{prop:A-products}
in Appendix \ref{ap:A-spaces}.
For a given $t\in[0,T]$ denote by $\varphi(s;t)$, $t+s\in[0,T]$, the solution of the ordinary differential equation
$\frac{\dd\varphi(s;t)}{\dd s}=u(t+s)\circ\varphi(s;t)$, $\varphi(0;t)=\id$.
By the uniqueness of solutions,
\[
\varphi(t+s)=\varphi(s;t)\circ\varphi(t)
\]
for $t+s\in[0,T]$. This together with \eqref{eq:NS_vorticity_form} implies that for any $t\in[0,T]$,
\begin{align*}
\frac{\dd}{\dd t}(\varphi^*\omega)(t)&=
\frac{\dd}{\dd s}\Big|_{s=0}\varphi(t)^*\big(\varphi(s;t)^*\omega(t+s)\big)
=\varphi(t)^*\big(\omega_t(t)+L_{u(t)}\omega(t)\big)\\
&=\nu\varphi(t)^*\big(\Delta\omega(t)\big)
\end{align*}
where the equality (and the differentiation) holds in the asymptotic space 
$\A^{m-3,p}_{1,N+1;-1}$ (cf. \eqref{eq:pull-back_omega-regularity}).
By integrating this equality, we then obtain that
\begin{equation}\label{eq:conserved_quantity}
(\varphi^*\omega)(t)=\omega(0)+\nu\int_0^t\varphi(s)^*\big(\Delta\omega(s)\big)\,ds,\quad 0\le t\le T,
\end{equation}
where the equality holds in $\A^{m-3,p}_{1,N+1;-1}$.
It follows from \eqref{eq:phi-regularity}, \eqref{eq:omega-regularity}, Proposition \ref{prop:A-properties}, and
Proposition \ref{prop:A-composition}, that 
\[
\varphi^*\big(\Delta\omega\big)\in C\big([0,T],\A^{m-3,p}_{3,N+3;-3}\big)
\]
and hence, the integral in \eqref{eq:conserved_quantity} exists as a Riemann integral in 
$\A^{m-3,p}_{3,N+3;-3}\subseteq\A^{m-3,p}_{1,N+1;-1}$.

\begin{Prop}\label{prop:conservation_law}
Assume that $m>4+\frac{d}{p}$ and let 
$u\in C\big([0,T],\aA^{m,p}_{N;0}\big)\cap C^1\big([0,T],\aA^{m-2,p}_{N;0}\big)$, $T>0$, 
be a solution of the Navier-Stokes equation \eqref{eq:NS}. Then, \eqref{eq:conserved_quantity} holds 
in $\A^{m-3,p}_{1,N+1;-1}$ and
\[
\varphi(t)^*\omega(t)-\omega(0)=\nu\int_0^t\varphi(s)^*\big(\Delta\omega(s)\big)\,ds
\in\A^{m-3,p}_{3,N+3;-3}
\]
for any $0\le t\le T$ where the integral exists as a Riemann integral.
\end{Prop}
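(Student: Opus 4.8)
The plan is to assemble the computations made in the paragraphs preceding the statement into a single argument, keeping track of the asymptotic-space indices. First I would invoke the existence and regularity \eqref{eq:phi-regularity} of the flow $\varphi$ solving \eqref{eq:ode}, which is \cite[Proposition~2.1]{McOwenTopalov3}. Next I would derive the parabolic vorticity equation \eqref{eq:NS_vorticity_form}: taking the exterior differential of the momentum equation in \eqref{eq:NS} annihilates the pressure term since $\dd(\dd\p)=0$, converts $(u\cdot\nabla u)^\flat=\iota_u\omega+\tfrac12\dd|u|^2$ into $L_u\omega$ by Cartan's formula together with $\dd\omega=0$, and commutes with $\Delta$ on $\R^d$. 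From \eqref{eq:u-solution} and the mapping properties of $\dd$ and $\Delta$ on the asymptotic spaces (Proposition \ref{prop:A-properties}) I would then read off the regularity \eqref{eq:omega-regularity} of $\omega$, and, using the product and composition estimates (Propositions \ref{prop:A-products} and \ref{prop:A-composition}), the regularity \eqref{eq:pull-back_omega-regularity} of $\varphi^*\omega$; the $C^1$-in-time part is precisely the two-variable argument with $G(t_1,t_2)=\omega_{kl}(t_1)\circ\varphi(t_2)$ carried out in \eqref{eq:term*}--\eqref{eq:G2}.

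The core of the proof is the identity $\frac{\dd}{\dd t}(\varphi^*\omega)(t)=\nu\,\varphi(t)^*\big(\Delta\omega(t)\big)$, holding in $\A^{m-3,p}_{1,N+1;-1}$. I would obtain it from the cocycle relation $\varphi(t+s)=\varphi(s;t)\circ\varphi(t)$, which follows from uniqueness of solutions of \eqref{eq:ode}, so that $(\varphi^*\omega)(t+s)=\varphi(t)^*\big(\varphi(s;t)^*\omega(t+s)\big)$. Since for each fixed $t$ the pull-back $\varphi(t)^*$ is a bounded linear operator on $\A^{m-3,p}_{1,N+1;-1}$ (again by Propositions \ref{prop:A-products} and \ref{prop:A-composition}), differentiating at $s=0$, using $\varphi(0;t)=\id$ and $\frac{\dd}{\dd s}\big|_{s=0}\varphi(s;t)^*\eta=L_{u(t)}\eta$, gives $\frac{\dd}{\dd t}(\varphi^*\omega)(t)=\varphi(t)^*\big(\omega_t(t)+L_{u(t)}\omega(t)\big)$, and the vorticity equation \eqref{eq:NS_vorticity_form} replaces the bracket by $\nu\Delta\omega(t)$. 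Integrating this $C^1$ Banach-space-valued identity by the fundamental theorem of calculus, together with $(\varphi^*\omega)(0)=\id^*\omega(0)=\omega(0)$, yields \eqref{eq:conserved_quantity} in $\A^{m-3,p}_{1,N+1;-1}$, which is the first assertion.

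For the sharper conclusion I would observe that $\Delta$ maps $\A^{m-1,p}_{1,N+1;-1}$ into $\A^{m-3,p}_{3,N+3;-3}$, so by \eqref{eq:omega-regularity} and the composition estimate the integrand satisfies $\varphi^*(\Delta\omega)\in C\big([0,T],\A^{m-3,p}_{3,N+3;-3}\big)$; hence the Riemann integral on the right of \eqref{eq:conserved_quantity} already converges in $\A^{m-3,p}_{3,N+3;-3}$, and, since the inclusion $\A^{m-3,p}_{3,N+3;-3}\hookrightarrow\A^{m-3,p}_{1,N+1;-1}$ is continuous, it coincides with the integral taken in the larger space. Therefore $\varphi(t)^*\omega(t)-\omega(0)$ lies in $\A^{m-3,p}_{3,N+3;-3}$ for every $t\in[0,T]$, which is the second assertion.

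The hard part, such as it is, is the index bookkeeping: checking against the Appendix that $\dd$ and $\Delta$ shift the leading power and the logarithmic count $\ell$ by exactly the stated amounts, and that composition and pull-back by $\id+f$ with $f\in\A^{m,p}_{N;0}$ map $\A^{m-3,p}_{1,N+1;-1}$ and $\A^{m-3,p}_{3,N+3;-3}$ into themselves. The only genuinely analytic point is the $C^1$-differentiability in time of the composition $t\mapsto\omega_{kl}(t)\circ\varphi(t)$, which is handled by the splitting of $G$ into its partial derivatives $\partial_{t_1}G$ and $\partial_{t_2}G$ and the $C^1$-dependence of composition in Proposition \ref{prop:A-composition}, as already done above.
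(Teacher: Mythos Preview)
Your proposal is correct and follows essentially the same approach as the paper: indeed, the paper's ``proof'' of this proposition is precisely the discussion in the paragraphs preceding the statement, which you have faithfully reassembled, including the flow regularity from \cite[Proposition~2.1]{McOwenTopalov3}, the vorticity equation \eqref{eq:NS_vorticity_form}, the two-variable map $G$ for the $C^1$ property, the cocycle relation, and the observation that the integrand lies in the smaller space $\A^{m-3,p}_{3,N+3;-3}$. Your added detail on deriving \eqref{eq:NS_vorticity_form} via Cartan's formula and on why the Riemann integrals in the two spaces coincide is a welcome elaboration of what the paper leaves implicit.
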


As a consequence from Proposition \ref{prop:conservation_law} we obtain the following Corollary.

\begin{Coro}\label{coro:preservation_of_weight_A}
Assume that $m>4+\frac{d}{p}$ and let 
$u\in C\big([0,T],\aA^{m,p}_{N;0}\big)\cap C^1\big([0,T],\aA^{m-2,p}_{N;0}\big)$, $T>0$, 
be a solution of the Navier-Stokes equation \eqref{eq:NS} such that the vorticity at time zero
$\omega_0$ belongs to $\A^{m-1,p}_{n,N+1;-1}$ for some $1\le n\le N+1$. Then,
$\omega\in C\big([0,T],\A^{m-1,p}_{n,N+1;-1}\big)\cap  C^1\big([0,T],\A^{m-3,p}_{n,N+1;-1}\big)$.
\end{Coro}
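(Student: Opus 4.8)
The plan is to combine the conservation identity from Proposition \ref{prop:conservation_law} with a bootstrap on the leading power $n$ of the asymptotic expansion of $\omega$. Fix $1\le n\le N+1$ and assume $\omega_0\in\A^{m-1,p}_{n,N+1;-1}$. From the general theory we already know $\omega\in C\big([0,T],\A^{m-1,p}_{1,N+1;-1}\big)\cap C^1\big([0,T],\A^{m-3,p}_{1,N+1;-1}\big)$ via \eqref{eq:omega-regularity}, so the issue is purely to upgrade the leading power from $1$ to $n$. By Proposition \ref{prop:conservation_law},
\[
\varphi(t)^*\omega(t)=\omega_0+\nu\int_0^t\varphi(s)^*\big(\Delta\omega(s)\big)\,ds,
\]
and the right-hand side lies in $\A^{m-3,p}_{n,N+1;-1}$: indeed $\omega_0$ does by hypothesis, while $\Delta\omega(s)\in\A^{m-3,p}_{3,N+3;-3}$ (differentiating twice raises the leading power by $2$ and lowers the log-exponent by $2$), and the pull-back by $\varphi(s)=\id+f(s)$ with $f(s)\in\A^{m,p}_{N;0}$ preserves the leading power — see Proposition \ref{prop:A-composition} — so $\varphi(s)^*(\Delta\omega(s))\in\A^{m-3,p}_{3,N+3;-3}\subseteq\A^{m-3,p}_{n,N+1;-1}$ for $n\le 3$, and in general $\varphi(s)^*(\Delta\omega(s))$ keeps a leading power that is at least $3$; since $n\le N+1$, whenever $n\ge 3$ one still has the required inclusion because $\Delta$ applied to $\omega\in\A^{m-1,p}_{n,N+1;-1}$ produces an element of $\A^{m-3,p}_{n+2,N+3;-3}$. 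The Riemann integral of a continuous curve in the Banach space $\A^{m-3,p}_{n,N+1;-1}$ stays in that space, so $\varphi(t)^*\omega(t)\in C\big([0,T],\A^{m-3,p}_{n,N+1;-1}\big)$.

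Next I would pull this information back to $\omega(t)$ itself by composing with $\varphi(t)^{-1}$. Since $\varphi(t)\in\A D^{m,p}_{N;0}$ and this is a topological group (Theorem \ref{th:A-composition}, Proposition \ref{prop:A-composition}), the inverse $\varphi(t)^{-1}=\id+g(t)$ with $g(t)\in\A^{m,p}_{N;0}$ depends continuously (indeed $C^1$) on $t$. Writing $\omega(t)=\big(\varphi(t)^{-1}\big)^*\big(\varphi(t)^*\omega(t)\big)$ and invoking once more that pull-back by an asymptotic diffeomorphism of the form $\id+(\text{element of }\A^{m,p}_{N;0})$ preserves the leading power in the asymptotic expansion of a tensor field (Proposition \ref{prop:A-composition}, together with the product estimates of Proposition \ref{prop:A-products} for the Jacobian factors in the pull-back formula \eqref{eq:pull-back_formula}), we conclude $\omega(t)\in\A^{m-3,p}_{n,N+1;-1}$ for every $t\in[0,T]$, with continuous dependence on $t$.

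To get the sharp regularity $\omega\in C\big([0,T],\A^{m-1,p}_{n,N+1;-1}\big)$ rather than only the $\A^{m-3,p}$ statement, I would feed this improved leading-power information back into the vorticity equation \eqref{eq:NS_vorticity_form}, $\omega_t=\nu\Delta\omega-L_u\omega$: we now know $\omega\in C\big([0,T],\A^{m-1,p}_{1,N+1;-1}\big)$ globally and $\omega(t)\in\A^{m-1,p}_{n,N+1;-1}$ pointwise in $t$; combined with the already-established continuity in the coarser topology and a standard closedness argument (the inclusion $\A^{m-1,p}_{n,N+1;-1}\hookrightarrow\A^{m-1,p}_{1,N+1;-1}$ is that of a closed subspace), one upgrades pointwise membership plus coarse continuity to continuity in $\A^{m-1,p}_{n,N+1;-1}$; then reading off $\omega_t=\nu\Delta\omega-L_u\omega$, with $\Delta\omega\in C\big([0,T],\A^{m-3,p}_{n+2,N+3;-3}\big)$ and $L_u\omega\in C\big([0,T],\A^{m-3,p}_{n,N+1;-1}\big)$ (the Lie derivative costs one derivative and, because the vector field $u$ has a nontrivial $O(1)$ part, the leading power of $L_u\omega$ is that of $\omega$, i.e.\ $n$), gives $\omega\in C^1\big([0,T],\A^{m-3,p}_{n,N+1;-1}\big)$. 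The main obstacle is the bookkeeping in the second paragraph: verifying that pulling back by $\varphi(t)$ and by $\varphi(t)^{-1}$ neither creates nor destroys leading asymptotic terms below order $n$. This is exactly the content that the composition and product lemmas in Appendix \ref{ap:A-spaces} are designed to handle, but one must apply them with the correct index shifts — in particular tracking how the Jacobian entries $\partial\varphi_k/\partial x_\alpha=\delta_{k\alpha}+\partial f_k/\partial x_\alpha$ contribute, via \eqref{eq:pull-back_formula}, only lower-order (in $1/r$) corrections that cannot lower the leading power of the composition $\omega_{kl}\circ\varphi$.
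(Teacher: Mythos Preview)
Your approach is the same as the paper's --- use the conservation identity for $\varphi^*\omega$, read off that the integral term has higher leading power, and pull back by $\varphi^{-1}$ --- but the argument as written is circular for $n>3$. You claim the right-hand side of
\[
\varphi(t)^*\omega(t)=\omega_0+\nu\int_0^t\varphi(s)^*(\Delta\omega(s))\,ds
\]
lies in $\A^{m-3,p}_{n,N+1;-1}$, and for the integral term when $n\ge 3$ you justify this by saying ``$\Delta$ applied to $\omega\in\A^{m-1,p}_{n,N+1;-1}$ produces an element of $\A^{m-3,p}_{n+2,N+3;-3}$.'' But at this point you only know $\omega(s)\in\A^{m-1,p}_{1,N+1;-1}$, so $\Delta\omega(s)\in\A^{m-3,p}_{3,N+3;-3}$ and nothing better; the integral term therefore has leading power $3$, not $n+2$. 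One pass through the identity thus only yields $\varphi(t)^*\omega(t)\in\A^{m-3,p}_{\min(n,3),N+1;-1}$.

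The fix is to actually carry out the bootstrap you announce in your opening sentence. After one pass you have $\omega\in C([0,T],\A^{m-1,p}_{n',N+1;-1})$ with $n'=\min(n,3)$ (your closed-subspace argument to restore the $\A^{m-1,p}$ regularity from the a priori \eqref{eq:omega-regularity} is fine here). Now feed this back: $\Delta\omega(s)\in\A^{m-3,p}_{n'+2,N+3;-3}$, so the integral term has leading power $n'+2$, and a second pass gives $n'=\min(n,5)$. Iterating finitely many times reaches $n'=n$. This is exactly what the paper does; the step you are missing is the inductive loop, not any new ingredient.
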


\noindent In other words, Corollary  \ref{coro:preservation_of_weight_A} states that the leading power
in the asymptotic expansion of the vorticity is preserved by the equation.

\begin{proof}[Proof of Corollary \ref{coro:preservation_of_weight_A}]
Assume that $\omega_0\in\A^{m-1,p}_{n,N+1;-1}$ for some $1\le n\le N$.
Then we obtain from Proposition \ref{prop:conservation_law} (cf. also \eqref{eq:conserved_quantity}), 
and Theorem \ref{th:A-composition} in Appendix \ref{ap:A-spaces},
that for any $t\in[0,T]$ the leading term in the asymptotic expansion of $\varphi(t)^*\omega(t)$
is of order $O(1/r^{n'})$ where $n':=\min(n,3)$.
This and the relation $\omega(t)=\psi(t)^*[\varphi(t)^*\omega(t)]$ where
$\psi(t):=\varphi(t)^{-1}\in\A D^{m,p}_{N;0}$ implies that the leading term in the asymptotic expansion of 
$\omega(t)$ is of order $O(1/r^{n'})$ and hence
\begin{equation}\label{eq:eqlt1}
\omega\in C\big([0,T],\A^{m-1,p}_{n',N+1;-1}\big)\cap C^1\big([0,T],\A^{m-3,p}_{n',N+1;-1}\big),
\end{equation}
and hence, the Corollary is proven in the case when $1\le n\le 3$. If $n>3$ we apply
Proposition \ref{prop:conservation_law} and Theorem \ref{th:A-composition} again to conclude that for any $t\in[0,T]$, 
\[
\omega(t)\in\A^{m-1,p}_{n',N+1;-1},\quad n':=\min(n,5),
\]
and that \eqref{eq:eqlt1} holds with $n'=\min(n,5)$.
This proves the Corollary for $1\le n\le 5$. The case of $n>5$ follows by repeating 
these arguments inductively.
\end{proof}

Let us now prove Proposition \ref{prop:preservation_of_weight_W} stated in the Introduction.

\begin{proof}[Proof of Proposition \ref{prop:preservation_of_weight_W}]
Assume that $\omega_0\in W^{m-1,p}_{\delta+1}$ for some $0\le\delta+\frac{d}{p}\le\gamma_N+\frac{d}{p}$ and
set $N':=\floor{\delta+\frac{d}{p}}\le N$ and  $\gamma_{N'}:=\delta$. 
Then $\omega_0\in W^{m-1,p}_{\delta+1}\subseteq\A^{m-1,p}_{N'+1,N'+1;0}$.
Since $u\in\A^{m,p}_{N;0}\subseteq\A^{m,p}_{N';0}$ we see that
$u\in C\big([0,T],\A^{m,p}_{N';0}\big)\cap C\big([0,T],\A^{m-2,p}_{N';0}\big)$ is a solution of \eqref{eq:NS}.
We then obtain from Corollary \ref{coro:preservation_of_weight_A} that
$\omega\in C\big([0,T],\A^{m-1,p}_{N'+1,N'+1;0}\big)\cap C^1\big([0,T],\A^{m-3,p}_{N'+1,N'+1;0}\big)$.
Then, 
\[
\Delta\omega(t)\in\A^{m-3,p}_{N'+3,N'+3;-2}\subseteq\A^{m-3,p}_{N'+2,N'+2;-2}
\] 
for any $t\in[0,T]$. Since $\varphi\in C\big([0,T],\A D^{m,p}_{N';0}\big)$ by \cite[Proposition 2.1]{McOwenTopalov3}, we
conclude from \eqref{eq:pull-back_formula} (with $\omega$ replaced by $\Delta\omega$), Proposition \ref{prop:A-products}, 
Proposition \ref{prop:A-composition} in Appendix \ref{ap:A-spaces}, that the integrand in \eqref{eq:conserved_quantity}
belongs to $C\big([0,T],\A^{m-3,p}_{N'+2,N'+2;-2}\big)$ and hence
$C\big([0,T],W^{m-3,p}_{\delta+1}\big)$ in view of the inclusion 
$\A^{m-3,p}_{N'+2,N'+2;-2}\subseteq W^{m-3,p}_{\delta+1}$. 
This implies that the integral in \eqref{eq:conserved_quantity} belongs to $W^{m-3,p}_{\delta+1}$, and hence
the asymptotic part of $\omega(t)$ in $\A^{m-1,p}_{N'+1,N'+1;0}$ vanishes and
$\omega(t)\in W^{m-1,p}_{\delta+1}$ for any $t\in[0,T]$. This proves that
$\omega\in C\big([0,T],W^{m-1,p}_{\delta+1}\big)\cap  C^1\big([0,T],W^{m-3,p}_{\delta+1}\big)$.
\end{proof}

\section{Proof of Theorem \ref{th:NS_W-spaces} and Proposition \ref{prop:asymptotics}}\label{sec:results_W-spaces}
In this Section we concentrate our attention to the case when the initial data $u_0$ belongs
to the weighted Sobolev space $W^{m,p}_\delta$ with $m>2+\frac{d}{p}$ and $\delta+\frac{d}{p}\ge 0$ and prove
Theorem \ref{th:NS_W-spaces} and Proposition \ref{prop:asymptotics} stated in the Introduction.

\begin{proof}[Proof of Theorem \ref{th:NS_W-spaces}]
Assume that $m>2+\frac{d}{p}$ and $\delta+\frac{d}{p}\ge 0$ and take $u_0\in\aW^{m,p}_\delta$.
Since $\aW^{m,p}_\delta$ is boundedly embedded in the asymptotic space $\aA^{m,p}_{N,N;0}$
with $N:=\floor{\delta+\frac{d}{p}}$, $\gamma_N:=\delta$, it follows from \cite[Theorem 1.1]{McOwenTopalov4} that
for any $\rho>0$ there exists $T>0$ such that for any divergence free initial data
$u_0\in B_{\aW^{m,p}_\delta}(\rho)$ there exists a unique solution of the Navier-Stokes equation \eqref{eq:NS},
\begin{equation}\label{eq:u_in_A}
u\in C\big([0,T],\aA^{m,p}_{N;0}\big)\cap C^1\big([0,T],\aA^{m-2,p}_{N;0}\big)\,.
\end{equation}
This solution depends continuously on the initial data $u_0\in\aW^{m,p}_\delta$ and, by
\cite[Remark 4.5]{McOwenTopalov5}, it is analytic when considered as a map
$(0,T)\to\aA^{m,p}_{N;0}$. In order to indicate the dependence of the solution \eqref{eq:u_in_A}
on the initial data $v\in B_{\aW^{m,p}_\delta}(\rho)$ we will use the notation $u(t;v)$, $t\in[0,T]$,
for the solution, and $\omega(t;v)$ for the corresponding vorticity form.
Then, $\omega_0\equiv\dd u_0^\flat\in W^{m-1,p}_{\delta+1}$ and if $m>4+\frac{d}{p}$ we obtain from
Proposition \ref{prop:preservation_of_weight_W} that
\begin{equation}\label{eq:omega_in_W}
\omega\in C\big([0,T],W^{m-1,p}_{\delta+1}\big)\cap  C^1\big([0,T],W^{m-3,p}_{\delta+1}\big).
\end{equation}
If $2+\frac{d}{p}<m\le 4+\frac{d}{p}$ we choose $\overline{m}>4+\frac{d}{p}$. 
Then, by the discussion above, for any 
$v\in B_{\aW^{m,p}_\delta}(\rho)\cap\aW^{\overline{m},p}_\delta$
there exist $0<T_v\le T$ and a unique solution of the Navier-Stokes equation 
$u(t;v)$, $t\in[0,T]$, such that \eqref{eq:u_in_A} holds and
\begin{equation}\label{eq:omega_in_A,m-bar}
\omega(\cdot;v)\in C\big([0,T_v],W^{\overline{m}-1,p}_{\delta+1}\big)\cap
C^1\big([0,T_v],W^{\overline{m}-3,p}_{\delta+1}\big)\,.
\end{equation}
In view of the analyticity of the solution \eqref{eq:u_in_A} in time $t\in(0,T)$ we conclude from
\eqref{eq:omega_in_A,m-bar} that the asymptotic terms of $\omega(t;v)$ vanish for any $t\in[0,T]$.
This implies that \eqref{eq:omega_in_W} holds for any 
$v\in B_{\aW^{m,p}_\delta}(\rho)\cap\aW^{\overline{m},p}_\delta$.
In view of the continuous dependence of the solution \eqref{eq:u_in_A} on the initial data
$v\in B_{\aW^{m,p}_\delta}(\rho)$ and the density of the embedding 
$W^{\overline{m},p}_\delta\subseteq W^{m,p}_\delta$ we then conclude by approximation that 
\eqref{eq:omega_in_W} holds for any $m>2+\frac{d}{p}$.

Since in coordinates $\omega_{jq}=\frac{\partial u_q}{\partial x_j}-\frac{\partial u_j}{\partial x_q}$,
$1\le j,q\le d$, we obtain that
\[
\sum_{j=1}^d\frac{\partial\omega_{jq}}{\partial x_j}=
\Delta u_q-\frac{\partial(\Div u)}{\partial x_q}=\Delta u_q.
\]
Let us first consider the case when $\delta+\frac{d}{p}\notin\Z_{\ge 0}$.
Then\footnote{This is the Biot-Savart identity.}
\begin{equation}\label{eq:Biot-Savart}
u_q=\Delta^{-1}(\Div\omega)_q,\quad 1\le q\le d,
\end{equation}
where $(\Div\omega)_q:=\sum_{j=1}^d\frac{\partial\omega_{jq}}{\partial x_j}\in W^{m-2,p}_{\delta+2}$ and 
we used the fact that
\[
\Delta^{-1} : W^{m-2,p}_{\delta+2}\to\mathcal{J}^{m,p}_\delta
\]
is an isomorphism of Banach spaces for $\delta+\frac{d}{p}>0$ (see \cite[Proposition B.1]{McOwenTopalov4}). 
The space $\mathcal{J}^{m,p}_\delta$ is a subspace of the asymptotic space $\A^{m,p}_{N;0}$ that
consists of elements of the form
\begin{equation}\label{eq:pre-form}
v=\chi(r)\sum_{d-2\le k<\delta+\frac{d}{p}; k\in\Z}\frac{a_k(\theta)}{r^k}+f,\quad f\in W^{m,p}_\delta,
\end{equation}
where the components of $a_k : \s^{d-1}\to\R^d$ are the restriction to $\s^{d-1}$ of homogeneous harmonic polynomials 
on $\R^d$ of degree $k':=k-d+2$ (see \cite[Proposition B.1]{McOwenTopalov4}). 
Hence, we have that $u(t)\in\mathcal{J}^{m,p}_\delta$ for any $t\in[0,T]$. 
In particular, we see that
\begin{equation}\label{eq:u_in_W}
u\in C\big([0,T],W^{m,p}_\delta\big)\cap  C^1\big([0,T],W^{m-2,p}_\delta\big).
\end{equation}
for $0<\delta+\frac{d}{p}<d-2$. Assume that $d-2<\delta+\frac{d}{p}\notin\Z_{\ge 0}$.
Let $\nu(k')$ be the dimension of the linear space $\mathcal{H}_{k'}$ of homogeneous harmonic polynomials of
degree $k'\in\Z_{\ge 0}$ on $\R^d$. For $k'\in\Z_{\ge 0}$ we choose a basis 
$\big\{H_{k';l}\,\big|\,1\le l\le\nu(k')\big\}$ in $\mathcal{H}_{k'}$. 
Recall that the map that assigns to any homogeneous harmonic polynomial of
degree $k'\ge 0$ on $\R^d$ its restriction to the sphere $\s^{d-1}$ is an isomorphism between the space 
$\mathcal{H}_{k'}$ and the eigenspace of the (positive) Laplace-Beltrami operator on $\s^{d-1}$ with 
eigenvalue $\lambda_{k'}:=k'(k'+d-2)$ (see, e.g., \cite[\S 22.2]{Shubin}). 
Take $k'\in\Z_{\ge 0}$ such that $d-2\le k<\delta+\frac{d}{p}\notin\Z_{\ge 0}$ and let $1\le l\le\nu(k')$.
Then, by \cite[Proposition B.1]{McOwenTopalov4} the Fourier coefficient $\widehat{a}_{k';l}^q$ of 
$\Delta^{-1}(\Div\omega)_q$ (cf. \eqref{eq:Biot-Savart}) that corresponds to the eigenfunction 
$h_{k';l}:=H_{k';l}\big|_{\s^d}\in C(\s^{d-1},\R)$ of the Laplace-Beltrami operator on $\s^{d-1}$ is given by
\begin{equation}\label{eq:the_fourier_coefficient}
\widehat{a}_{k';l}^q=-C_{d,k'}\int_{\R^d}(\Div\omega)_q(x)\,H_{k';l}(x)\,dx
\end{equation}
where $C_{d,k'}>0$ is a constant depending only on the choice of $d$ and $k'$.
Let us now show that the first three asymptotic functions $a_{d-2}(t)$, $a_{d-1}(t)$, and $a_d(t)$
do not appear for any $t\in[0,T]$ in the expansion \eqref{eq:pre-form} of the solution $u(t)$ given by \eqref{eq:Biot-Savart}.
The vanishing of $a_{d-2}(t)$ follows from \eqref{eq:the_fourier_coefficient} (with $k'=0$ and $H_{k';l}(x)=\const$), 
the divergence theorem, and the fact that $\omega(t)=o\big(1/r^{d-1}\big)$ as $|x|\to\infty$ in view of
\eqref{eq:W-estimate*} and the fact that $d-2<\delta+\frac{d}{p}\notin\Z_{\ge 0}$ and $\omega(t)\in W^{m-1,p}_{\delta+1}$.
Hence, \eqref{eq:u_in_W} holds for $d-2<\delta+\frac{d}{p}<d-1$.
In order to see that the other two asymptotic functions also vanish, we assume that $d-1<\delta+\frac{d}{p}\notin\Z_{\ge 0}$ 
and recall from \cite[Section 4]{McOwenTopalov5} that the Navier-Stokes equation can be written in the form
\begin{equation}\label{eq:NSQ}
u_t+u\cdot\nabla u=\nu\Delta u+\Delta^{-1}\big(\nabla Q(u)\big),\quad 
Q(u):=\tr\big((\dd u)^2\big)=\Div\big(u\cdot\nabla u\big),
\end{equation}
where $\dd u$ denotes the Jacobi matrix of $u\in\mathcal{J}^{m,p}_\delta$.  
In view of \eqref{eq:pre-form}, \eqref{eq:W-estimate*},  and the vanishing of $a_{d-2}$,
we have that $u\cdot\nabla u=O\big(1/r^{2d-1}\big)$ and $\Delta u=O\big(1/r^{d+1}\big)$ as $|x|\to\infty$. 
By comparing the leading terms in the asymptotic expansions of the both hand sides of \eqref{eq:NSQ} as $|x|\to\infty$
we then obtain that the leading two terms in the asymptotic expansion of $u_t$ are 
generated by $\Delta^{-1}\big(\nabla Q(u)\big)$. 

\begin{Rem}\label{rem:J_in_W}
It follows from \eqref{eq:pre-form}, \eqref{eq:W-estimate*}, and the vanishing of $a_{d-2}$, 
that for any given $t\in[0,T]$ we have that $u(t)=O\big(1/r^{d-1}\big)$ as $|x|\to\infty$ and that 
$u(t)\in W^{m,p}_{\delta_1}$ for some $d-2<\delta_1+\frac{d}{p}<d-1$ independent on $t$. 
By the same reasoning, we obtain from the second relation in \eqref{eq:NSQ} that 
$\nabla Q(u(t))=O\big(1/r^{2d+1}\big)$ as $|x|\to\infty$ and $\nabla Q(u(t))\in W^{m-2,p}_{\delta_2+2}$
for some $d+2\le 2d<\delta_2+2<2d+1$ independent on $t$.
Since $\delta_2>d$ we can apply \cite[Proposition B.1]{McOwenTopalov4} to conclude 
that $\Delta^{-1}\big(\nabla Q(u(t))\big)\in J^{m,p}_{\delta_2}$ with a remainder function
of order $o\big(1/r^d\big)$ as $|x|\to\infty$.
\end{Rem}

\noindent It now follows from \eqref{eq:NSQ}, Remark \ref{rem:J_in_W}, and \cite[Proposition B.1]{McOwenTopalov4},
that for any $t\in[0,T]$, $k'=1,2$, and for any $1\le q\le d$, $1\le l\le\nu(k')$, we have that
\begin{equation}\label{eq:a-derivative}
\frac{\dd}{\dd t}\,\widehat{a}_{k';l}^q(t)=\widehat{b}_{k';l}^q(t)\quad\text{\rm and}\quad
\widehat{b}_{k';l}^q(t):=-C_{d,k'}\int_{\R^d}H_{k';l}\,\partial_q\big(Q(u(t))\big)\,dx
\end{equation}
where $\widehat{b}_{k';l}^q(t)$ denotes the Fourier coefficient of the $q$-th components of
the $k$-th asymptotic term of $\Delta^{-1}\big(\nabla Q(u)\big)$ corresponding to the eigenfunction $h_{k';l}$.
It follows from the divergence theorem (applied three times) and the second relation in \eqref{eq:NSQ} that
\begin{equation}\label{eq:Fourier_coefficient_relation}
\int_{\R^d}H_{k';l}\,\partial_q\big(Q(v)\big)\,dx=
-\int_{\R^d}\sum_{1\le\alpha,\beta\le d}
\frac{\partial^3 H_{k';l}}{\partial x_\alpha\partial x_\beta\partial x_q}(x)\,v_\alpha(x) v_\beta(x)\,dx=0
\end{equation}
for any divergence free vector field $v\in C^\infty_c(\R^d)$ with compact support
(see, e.g., formula (64) and (66) in \cite{McOwenTopalov4}). 
The right hand side in \eqref{eq:Fourier_coefficient_relation} vanishes since the degree of 
the homogeneous polynomial $H_{k';l}$ is assumed to be one or two.
By the density of the embedding $C^\infty_c\subseteq W^{m,p}_{\delta_1}$ (cf. Remark \ref{rem:J_in_W}) 
and the continuity of the left hand side of \eqref{eq:Fourier_coefficient_relation} as a function of 
$v\in W^{m,p}_{\delta_1}$, we conclude that the left hand side of \eqref{eq:Fourier_coefficient_relation}
vanishes  and for any $k'=1,2$ and for any $v\in J^{m,p}_\delta\subseteq W^{m,p}_{\delta_1}$ with vanishing $a_{d-2}$.
It then follows from \eqref{eq:a-derivative} that $a_{d-1}(t)=a_{d-1}(0)=0$ and  $a_d(t)=a_d(0)=0$
for any $t\in[0,T]$. This proves that (a) and (b) hold in the case when $\delta+\frac{d}{p}\notin\Z_{\ge 0}$.
The case $\delta+\frac{d}{p}\in\Z_{\ge 0}$ then follows from the continuous dependence of the solution
\eqref{eq:u_in_A} on the initial data $u_0\in W^{m,p}_\delta$ and the case when 
$\delta+\frac{d}{p}\notin\Z_{\ge 0}$. In fact, assume that $\kappa:=\delta+\frac{d}{p}\in\Z_{\ge 0}$.
Take $\delta'\in\R$ such that $\delta'+\frac{d}{p}\in(\kappa,\kappa+1)$. Then 
$W^{m,p}_{\delta'}$ is densely embedded in $W^{m,p}_\delta$. 
Take a sequence $u_{0n}\in W^{m,p}_{\delta'}$, $n\ge 1$, such that
\[
u_{0n}\stackrel{W^{m,p}_\delta}{\longrightarrow}u_0,\quad n\to\infty\,.
\]
In particular, the sequence converges in the asymptotic space $\A^{m,p}_{N;0}$ and
by the continuous dependence of the solutions  of the Navier-Stokes equation on the initial data
we obtain that for any $t\in[0,T]$ the asymptotic functions of the solution $u(t;u_{0n})$ converge
uniformly on $\s^{d-1}$ to the asymptotic functions of the solution $u(t;u_0)$
(cf. the definition of the norm \eqref{def:AW;-norm}).
Since the solution $u(t;u_{0n})$ is of the form \eqref{eq:solution(b)}, we obtain that
$u(t;u_0)$ has the same form.
This proves items (a) and (b) in the general case.
The analyticity of the solution follows from \cite[Remark 4.5]{McOwenTopalov5}.
\end{proof}

\begin{proof}[Proof of Proposition \ref{prop:asymptotics}]
The proof of this Proposition is the same as the proof of \cite[Proposition 1.1]{McOwenTopalov4} and
is based on \cite[Lemma 5.1]{McOwenTopalov4}.
We omit the details.
\end{proof}

\section{The spatial smoothing}\label{sec:spatial_smoothing}
In this Section we prove a spatial smoothing property of the solutions of the Navier-Stokes equation in
weighted Sobolev and asymptotic spaces. 

\begin{proof}[Proof of Proposition \ref{prop:smoothingW}]
Assume that $m>2+\frac{d}{p}$ and let $0\le\delta+\frac{d}{p}<d+1$.
Take $u_0\in\aW^{m,p}_\delta$ and let
$u\in C\big([0,T_\infty),\aW^{m,p}_\delta\big)\cap C^1\big([0,T_\infty),\aW^{m-2,p}_\delta\big)$
be the solution of the Navier-Stokes equation with initial data $u_0\in\aW^{m,p}_\delta$ given
by Theorem \ref{th:NS_W-spaces} on its maximal interval of existence $T_\infty>0$.
Then, by \cite[Remark 4.5]{McOwenTopalov5}, 
\[
u\in C^k\big((0,T_\infty),\aW^{m,p}_\delta\big)
\]
for any given $k\ge 1$. In particular, $u$ satisfies the non-homogeneous heat equation
\begin{equation}\label{eq:non-homogeneous_linear_equation}
u_t=\nu\Delta u+f(t),\quad u|_{t=0}=u_0,
\end{equation}
where $f(t):=F(u(t))$, 
\begin{align*}
F(u):=\Delta^{-1}\circ\nabla\circ Q(u)-u\cdot\nabla u,\quad Q(u)\equiv\tr\big((\dd u)^2\big)
\end{align*}
(see \cite[Section 4]{McOwenTopalov5}).
It follows from \cite[Proposition B.1]{McOwenTopalov4} and formula \eqref{eq:Fourier_coefficient_relation} 
(see the arguments in the proof of Theorem \ref{th:NS_W-spaces}) that the map
\[
F_1 : W^{m,p}_\delta\to W^{m,p}_{\delta+1},\quad u\mapsto\Delta^{-1}\big(\nabla Q(u)\big),
\]
is well-defined and analytic. Similarly, in view of \eqref{eq:W-product} the map
and $F_2 : W^{m,p}_\delta\to W^{m-1,p}_{\delta+1}$, $u\mapsto u\cdot\nabla u$,
is analytic. This implies that the map $F : W^{m,p}_\delta\to W^{m-1,p}_\delta$ is analytic, and hence
\[
f\in C\big([0,T_\infty),W^{m-1,p}_\delta\big)\cap C^1\big((0,T_\infty),W^{m-1,p}_\delta\big)
\]
for any $k\ge 0$.
Since the Laplace operator $\Delta$ when considered as an unbounded operator on $W^{m-1,p}_\delta$
with domain 
$\widetilde{W}^{m+1,p}_\delta:=\big\{v\in\Sz'\,\big|\, \partial^\alpha v\in W^{m-1,p}_\delta, |\alpha|\le 2\big\}$
generates an analytic semigroup (\cite[Theorem 2.2]{McOwenTopalov5}), we conclude from
\cite[Theorem 3.2.2]{Henry} and the uniqueness of solutions of the heat equation 
(cf., e.g., \cite[Proposition B.1]{McOwenTopalov5}) that
\begin{equation}\label{eq:eqlt3}
u\in C\big((0,T_\infty),\widetilde{W}^{m+1,p}_\delta\big)\cap C^1\big((0,T_\infty),W^{m-1,p}_\delta\big)\,.
\end{equation}
Recall from \cite[Lemma B.7]{McOwenTopalov5} that
$\widetilde{W}^{m+1,p}_\delta=W^{m-1,p}_\delta\cap W^{m,p}_{\delta-1}\cap W^{m+1,p}_{\delta-2}$.
By combining this with \eqref{eq:eqlt3} we conclude that $u\in C\big((0,T_\infty),\aW^{m+1,p}_{\delta-2}\big)$.
Assume that $\delta-2+\frac{d}{p}\ge 0$. Then, we take an arbitrary $t_0\in(0,T_\infty)$ and consider 
the Navier-Stokes equation with initial data $v|_{t=t_0}=u(t_0)\in\aW^{m+1,p}_{\delta-2}$. 
By Theorem \ref{th:NS_W-spaces} there exists $\epsilon>0$, $t_0+\epsilon<T$, and a solution 
$v\in C\big([t_0,t_0+\epsilon),\aW^{m+1,p}_{\delta-2}\big)\cap 
C\big([t_0,t_0+\epsilon),\aW^{m-1,p}_{\delta-2}\big)$
of the Navier-Stokes equation with initial data $u(t_0)$ at $t=t_0$. 
By \cite[Remark 4.5]{McOwenTopalov5}, $v\in C^k\big((t_0,t_0+\epsilon),\aW^{m+1,p}_{\delta-2}\big)$
for any $k\ge 0$. In view of the uniquenss of the solutions of the Navier-Stokes equation in the class 
$C\big([t_0,t_0+\epsilon),\aW^{m,p}_{\delta-2}\big)\cap 
C\big([t_0,t_0+\epsilon),\aW^{m-2,p}_{\delta-2}\big)$ (apply again Theorem \ref{th:NS_W-spaces})
we then conclude that $v=u_{[t_0,t_0+\epsilon)}$. This implies that 
\[  
u\in C^k\big((0,T_\infty),\aW^{m+1,p}_{\delta-2}\big)
\]
for any $k\ge 1$. In particular, 
$u\in C\big((0,T_\infty),\aW^{m+1,p}_{\delta-2}\big)\cap C^1\big((0,T_\infty),\aW^{m-1,p}_{\delta-2}\big)$.
This allows us to repeat the above argument $j_*$ times (where $j_*\ge 0$ is
the maximal integer such that $\delta-2j_*+\frac{d}{p}\ge 0$)
and to conclude the statement of the Proposition inductively.
\end{proof}



\appendix

\section{Basic properties of asymptotic spaces}\label{ap:A-spaces}
In this Appendix we summarize for the convenience of the reader some of the basic properties of
the weighted and the asymptotic spaces introduced in the Introduction. 
For further details we refer to \cite[Appendix B]{McOwenTopalov2} and \cite[Appendix C]{McOwenTopalov4}. 
Assume that $1<p<\infty$ and recall from \cite[Lemma 2.2]{McOwenTopalov2} that for any $m\ge 0$, 
$\delta\in\R$, $1\le k\le d$, $\partial_k : W^{m+1,p}_\delta\to W^{m,p}_{\delta+1}$ is bounded. 

\begin{Prop}\label{prop:A-properties}  
Assume that $m\ge 0$. Then, we have
\begin{enumerate}
\item[(i)] If $n_1\geq n$, $N_1\geq N$, and $\ell_1\leq\ell$, then we have a bounded inclusion 
${\mathcal A}_{n_1,N_1;\ell_1}^{m,p}\subseteq {\mathcal A}_{n,N;\ell}^{m,p}$.
\item[(ii)]  If $m\geq 1$,  then $u\mapsto \partial u/\partial x_j$ is a bounded linear map 
${\mathcal A}_{n,N;\ell}^{m,p}\to{\mathcal A}_{n+1,N+1;\ell-1}^{m-1,p}$.
\item[(iii)] Multiplication by $\chi(r)\,r^{-k}$ is bounded ${\mathcal A}_{n,N;\ell}^{m,p}\!\to\! 
{\mathcal A}_{n+k,N+k;\ell-k}^{m,p}$.
\item[(iv)] Multiplication by $\chi(r)\,(\log r)^{j}$ is bounded 
${\mathcal A}_{n,N;\ell}^{m,p}\!\to\! {\mathcal A}_{n,N^-;\ell+j}^{m,p}$
for any $N^-<N$.
\end{enumerate}
\end{Prop}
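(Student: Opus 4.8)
The plan is to decompose an arbitrary $v=a+f\in\A^{m,p}_{n,N;\ell}$ into its asymptotic part $a$, a finite sum of \emph{model terms} $\chi(r)(\log r)^q r^{-k}b(\theta)$ with $b\in H^{s,p}(\s^{d-1})$, and its remainder $f\in W^{m,p}_{\gamma_N}$, and then to track separately the effect of each operation on a single model term and on the remainder, reading off boundedness from the norm \eqref{def:AW;-norm}. Three elementary ingredients are used throughout: (1) the monotonicity $W^{m,p}_{\delta'}\subseteq W^{m,p}_{\delta}$ for $\delta'\ge\delta$; (2) the weighted multiplier bounds ``multiplication by $\chi(r)r^{-k}$ is bounded $W^{m,p}_{\delta}\to W^{m,p}_{\delta+k}$'' (since $\partial^\alpha(\chi(r)r^{-k})=O(\langle x\rangle^{-k-|\alpha|})$) and ``multiplication by $\chi(r)(\log r)^j$ is bounded $W^{m,p}_{\delta}\to W^{m,p}_{\delta-1}$'' (since $\langle x\rangle^{-1}(\log\langle x\rangle)^j$ is bounded); and (3) an \emph{absorption lemma}: a model term $\chi(r)(\log r)^q r^{-k}b(\theta)$ with $b\in H^{s,p}(\s^{d-1})$, $s>m$, lies in $W^{m,p}_{\delta}$ whenever $\delta+\frac{d}{p}<k$ — a direct estimate of the $W^{m,p}_{\delta}$-norm using $\langle x\rangle^{\delta+\frac{d}{p}-k}(\log\langle x\rangle)^q\to0$, and the one place where the standing hypothesis $0\le\gamma_0+\frac{d}{p}<1$ is genuinely needed. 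I also use freely that replacing $\chi$ by $\chi^2$ (or by any cut-off equal to $1$ near infinity) changes a model term only by a compactly supported function, hence by an element of the remainder.

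Items (i) and (iii) amount to reindexing the model terms. In (i), a model term of $v$ at order $n_1\le k\le N$ has log degree $\le k+\ell_1\le k+\ell$ and angular regularity $H^{m+1+N_1-k,p}\subseteq H^{m+1+N-k,p}$, hence is admissible for $\A^{m,p}_{n,N;\ell}$ as is; a term at order $N<k\le N_1$ has $\gamma_N+\frac{d}{p}<N+1\le k$ and is absorbed into the remainder by the lemma; and $f\in W^{m,p}_{\gamma_{N_1}}\subseteq W^{m,p}_{\gamma_N}$ since $\gamma_{N_1}\ge\gamma_N$. In (iii), multiplication by $\chi(r)r^{-k}$ sends a model term of order $i$ and log degree $\le i+\ell$ to one of order $i+k$ with the same log degree $\le(i+k)+(\ell-k)$ and the same angular regularity $H^{m+1+N-i,p}=H^{m+1+(N+k)-(i+k),p}$, and it sends $f$ into $W^{m,p}_{\gamma_N+k}=W^{m,p}_{\gamma_{N+k}}$, with no absorption needed. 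Item (iv) is the mirror image: multiplication by $\chi(r)(\log r)^j$ raises every log degree by $j$, so a model term of order $n\le k\le N^-$ stays admissible for $\A^{m,p}_{n,N^-;\ell+j}$ (log degree $\le(k+\ell)+j$; regularity $H^{m+1+N-k,p}\subseteq H^{m+1+N^--k,p}$ as $N\ge N^-$), a term of order $N^-<k\le N$ satisfies $\gamma_{N^-}+\frac{d}{p}=N^-+\gamma_0+\frac{d}{p}<N^-+1\le k$ and is absorbed, and on the remainder one uses $\langle x\rangle^{\gamma_{N^-}-\gamma_N}(\log\langle x\rangle)^j=\langle x\rangle^{-(N-N^-)}(\log\langle x\rangle)^j$, bounded since $N-N^-\ge1$, to get boundedness $W^{m,p}_{\gamma_N}\to W^{m,p}_{\gamma_{N^-}}$.

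The substantive work is in (ii), which requires spherical-coordinate bookkeeping. I would write $\partial_{x_j}=\theta_j\,\partial_r+r^{-1}D_j$, where $D_j$ is a first-order differential operator on $\s^{d-1}$ with smooth coefficients (the tangential part of $\partial_{x_j}$, namely $D_jb=\sum_i(\delta_{ij}-\theta_i\theta_j)\partial_{\theta_i}b$). Applied to a model term $\chi(r)(\log r)^q r^{-k}b(\theta)$, the radial piece yields, by the Leibniz rule, a compactly supported function (from $\chi'$) plus terms $\chi(r)(\log r)^{q'}r^{-k-1}(\theta_jb)(\theta)$ with $q'\in\{q-1,q\}$ and $\theta_jb\in H^{m+1+N-k,p}(\s^{d-1})$, while the angular piece yields $\chi(r)(\log r)^q r^{-k-1}(D_jb)(\theta)$ with $D_jb\in H^{m+N-k,p}(\s^{d-1})$. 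In each case the order has increased from $k$ to $k+1$ (so the orders run over $[n+1,N+1]$), the log degree is still $\le q\le k+\ell=(k+1)+(\ell-1)$, and the angular regularity is $\ge m+N-k=(m-1)+1+(N+1)-(k+1)$ — precisely what $\A^{m-1,p}_{n+1,N+1;\ell-1}$ demands; the $\chi'$-pieces together with $\partial_j f\in W^{m-1,p}_{\gamma_N+1}=W^{m-1,p}_{\gamma_{N+1}}$ (by the cited boundedness of $\partial_j$ on weighted Sobolev spaces) go into the remainder. Summing the estimates over the finitely many model terms and the remainder yields boundedness of all four maps. The main obstacle is not a single hard estimate but the disciplined bookkeeping in (ii) — keeping the triple (order, log degree, angular regularity) synchronized through the $\partial_r$ versus $r^{-1}D_j$ split — together with a clean formulation of the absorption lemma in (3).
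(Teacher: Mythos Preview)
Your proof sketch is correct and complete as an outline. Note, however, that the paper does \emph{not} give its own proof of this proposition: it is stated in Appendix~\ref{ap:A-spaces} as a summary of known facts, with the reader referred to \cite[Appendix B]{McOwenTopalov2} and \cite[Appendix C]{McOwenTopalov4} for details. So there is no ``paper's proof'' to compare against here.

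That said, your argument is exactly the natural one and is presumably what those references contain: decompose into model terms plus remainder, track the triple (radial order, log degree, angular Sobolev index) under each operation, and use the absorption lemma (model term of order $k$ lies in $W^{m,p}_\delta$ once $\delta+\tfrac{d}{p}<k$, which is where the standing assumption $0\le\gamma_0+\tfrac{d}{p}<1$ enters) to dump surplus terms into the remainder. Your handling of (ii) via $\partial_{x_j}=\theta_j\partial_r+r^{-1}D_j$ is the standard bookkeeping, and your index accounting --- order $k\mapsto k+1$, log degree $\le k+\ell=(k+1)+(\ell-1)$, angular regularity dropping by at most one to $H^{m+N-k,p}=H^{(m-1)+1+(N+1)-(k+1),p}$ --- is exactly right. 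The only cosmetic point: in your absorption lemma you require $s>m$ for the angular regularity of $b$; in fact $s\ge m$ suffices, but since every coefficient in the definition of $\A^{m,p}_{n,N;\ell}$ already sits in $H^{m+1+N-k,p}$ with $m+1+N-k\ge m+1$, this never matters.
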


\noindent We also consider products of functions in asymptotic spaces. 
Recall from the Introduction (cf. \cite[Proposition 1.2, Lemma 2.2]{McOwenTopalov2},
\cite[Appemdix A]{SultanTopalov}) that, for $m>\frac{d}{p}$ and any $\delta_1,\delta_2\in\R$, 
pointwise multiplication of functions $(f,g)\mapsto fg$ defines a continuous map
\begin{equation*}
W^{m,p}_{\delta_1} \times W^{m,p}_{\delta_2} \to W^{m,p}_{\delta_1+\delta_2+\frac{d}{p}}.
\end{equation*}
This property is needed for the proof of the following Proposition (see, e.g., \cite[Appendix B]{McOwenTopalov2}).

\begin{Prop}\label{prop:A-products}
\begin{itemize}
\item[(i)]Suppose $m>\frac{d}{p}$,  $0\leq n_i\leq N_i$,  and $\ell_i+n_i\geq 0$  for $i=1,2$. Let $n_0:=n_1+n_2$ and
$\ell_0:=\ell_1+\ell_2$. Then, 
\begin{equation}
\|u\,v\|_{{\mathcal A}_{n_0,N_0;-n_0}^{m,p} }\le C\,\|u\|_{{\mathcal A}_{n_1,N_1;-n_1}^{m,p} }
\|v\|_{{\mathcal A}_{n_2,N_2;-n_2}^{m,p} }
\ \hbox{for}\ u\in {\mathcal A}_{n_1,N_1;-n_1}^{m,p},\ v\in {\mathcal A}_{n_2,N_2;-n_2}^{m,p}
\end{equation}
where $N_0:=\min(N_1+n_2,N_2+n_1)$.
\item[(ii)] If $m>\frac{d}{p}$ and $-n\le\ell\le 0$, then $\A^{m,p}_{n,N;\ell}$ is a Banach algebra. 
In particular, $\A^{m,p}_{n,N;0}$ is a Banach algebra if $m>\frac{d}{p}$ and $0\le n\le N$.
\end{itemize}
\end{Prop}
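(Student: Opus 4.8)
The plan for (i) is to split each factor into its asymptotic part and its remainder, multiply out, and sort the four resulting pieces into the asymptotic part and the remainder of the product, using only tools already at hand: the pointwise multiplication $W^{m,p}_{\delta_1}\times W^{m,p}_{\delta_2}\to W^{m,p}_{\delta_1+\delta_2+\frac dp}$; the Banach algebra property of $H^{s,p}(\s^{d-1})$ for $s>\frac{d-1}{p}$, which applies here since $m>\frac dp>\frac{d-1}{p}$, so that all coefficient spaces $H^{m+1+N_i-k,p}(\s^{d-1})$ with $k\le N_i$ are algebras; and the estimate, obtained from the Leibniz rule and the norm \eqref{W-norm}, that multiplication by a symbol $\chi(r)\,h(\theta)\,r^{-k}$ sends $W^{m,p}_\delta$ into $W^{m,p}_{\delta+k}$ when $h$ is sufficiently regular on $\s^{d-1}$ (with only an arbitrarily small loss of weight if an extra factor $(\log r)^i$, $i\ge1$, is present). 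I would write $u=a+f$, $v=b+g$ as in \eqref{AWlog-expansion1}--\eqref{AW-expansion3}, noting that since $\ell_1=-n_1$ and $\ell_2=-n_2$ the asymptotic parts $a,b$ are finite sums of $\chi(r)\,a^i_k(\theta)\,(\log r)^i/r^k$ over $n_i\le k\le N_i$, $0\le i\le k-n_i$, with $a^i_k\in H^{m+1+N_i-k,p}(\s^{d-1})$; the crucial structural point is that at the leading level $k=n_i$ \emph{no logarithm occurs}. Then $uv=ab+ag+bf+fg$, and it remains to place each summand correctly, with a norm bound linear in $\|u\|\,\|v\|$.

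The remainder--remainder term is immediate: $W$-multiplication gives $fg\in W^{m,p}_{\gamma_{N_1}+\gamma_{N_2}+\frac dp}$, and since $\gamma_{N_1}+\gamma_{N_2}+\frac dp=N_1+N_2+2\gamma_0+\frac dp\ge N_0+\gamma_0=\gamma_{N_0}$ (using $N_0\le N_1+n_2\le N_1+N_2$ and $\gamma_0+\frac dp\ge0$), we get $fg\in W^{m,p}_{\gamma_{N_0}}$, which goes into the remainder. For the mixed terms I would expand $ag$ and $bf$ into the finitely many pieces $\chi(r)\,a^i_k(\theta)\,(\log r)^i\,r^{-k}\cdot g$ and apply the symbol estimate: such a piece lands in $W^{m,p}_{\gamma_{N_2}+k}$, up to an arbitrarily small weight loss if a logarithm is present. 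Now $k\ge n_1$ and $N_0\le N_2+n_1$, so $\gamma_{N_2}+k=N_2+\gamma_0+k\ge N_2+n_1+\gamma_0=\gamma_{N_0}+(N_2+n_1-N_0)\ge\gamma_{N_0}$, with a strict margin of at least one whenever $k>n_1$; and exactly at the borderline level $k=n_1$ there is no logarithm, so the loss does not occur and the piece lies in $W^{m,p}_{\gamma_{N_2}+n_1}\subseteq W^{m,p}_{\gamma_{N_0}}$ on the nose. Hence $ag$ and $bf$ also go into the remainder, and the asymptotic part of $uv$ comes entirely from $ab$.

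For $ab$ I would expand into the finitely many terms $\chi(r)\,(a^i_k b^j_l)(\theta)\,(\log r)^{i+j}\,r^{-(k+l)}$ (replacing $\chi^2$ by $\chi$ at the cost of a smooth compactly supported remainder) and group by total order $K:=k+l$ and logarithmic degree. By the sphere algebra property $a^i_k b^j_l\in H^{\min(m+1+N_1-k,\,m+1+N_2-l),p}(\s^{d-1})$, and the arithmetic identity $\min(N_1-k,\,N_2-l)\ge N_0-(k+l)$ — which follows from $k\ge n_1$, $l\ge n_2$ and $N_0=\min(N_1+n_2,N_2+n_1)$ — shows this Sobolev index is at least $m+1+N_0-K$. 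Since the logarithmic degree satisfies $i+j\le(k-n_1)+(l-n_2)=K-n_0$, the terms with $n_0\le K\le N_0$ are legitimate asymptotic terms of an element of $\A^{m,p}_{n_0,N_0;-n_0}$ (right coefficient regularity, right log-degree bound), while those with $K>N_0$ decay like $r^{-K}(\log r)^{i+j}$ with $K\ge N_0+1>\gamma_{N_0}+\frac dp$ and have coefficients in $H^{m,p}(\s^{d-1})$, hence lie in $W^{m,p}_{\gamma_{N_0}}$ and join the remainder. Every estimate above is linear in $\|a^i_k\|$ and $\|b^j_l\|$, so summing the finitely many contributions gives (i). The genuine obstacle here will be the endpoint bookkeeping in the mixed-term symbol estimates — making sure no $\varepsilon$ of weight is lost at the level $k=n_1$ — which is precisely why the hypothesis $\ell_i=-n_i$ (no logarithm at the leading level) is imposed and why the coefficient spaces carry the extra unit of regularity $H^{m+1+\cdots,p}(\s^{d-1})$.

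For (ii) I would derive the case $\ell=-n$ directly from (i): with $n_1=n_2=n$ and $N_1=N_2=N$ it yields $\A^{m,p}_{n,N;-n}\cdot\A^{m,p}_{n,N;-n}\subseteq\A^{m,p}_{2n,N+n;-2n}$, and since $2n\ge n$, $N+n\ge N$, $-2n\le-n$, Proposition~\ref{prop:A-properties}(i) gives $\A^{m,p}_{2n,N+n;-2n}\subseteq\A^{m,p}_{n,N;-n}$, so $\A^{m,p}_{n,N;-n}$ is a Banach algebra. For general $-n\le\ell\le0$ the same decomposition argument applies with the only change that an asymptotic factor now carries log-degree $\le k+\ell$ at level $k$, so a product term at order $K$ has log-degree $\le(k+\ell)+(l+\ell)=K+2\ell\le K+\ell$ since $\ell\le0$; the same bookkeeping then gives $\A^{m,p}_{n,N;\ell}\cdot\A^{m,p}_{n,N;\ell}\subseteq\A^{m,p}_{2n,N+n;2\ell}\subseteq\A^{m,p}_{n,N;\ell}$, the last inclusion again by Proposition~\ref{prop:A-properties}(i) because $2\ell\le\ell$, and the case $\ell=0$, $0\le n\le N$, is a special case.
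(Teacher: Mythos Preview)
The paper does not actually prove this proposition: it is stated in Appendix~\ref{ap:A-spaces} with a reference to \cite[Appendix~B]{McOwenTopalov2} for the proof. Your decomposition $uv=ab+ag+bf+fg$ and the term-by-term analysis is exactly the natural argument and is, to the best of my knowledge, the one carried out in that reference; your handling of part~(i) is correct, including the key arithmetic identity $\min(N_1-k,\,N_2-l)\ge N_0-(k+l)$ for the coefficient regularity and the observation that the borderline mixed term at $k=n_1$ carries no logarithm precisely because $\ell_1=-n_1$.

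There is, however, a small slip in your argument for (ii) in the range $-n<\ell\le 0$ with $n\ge 1$. You claim the product lands in $\A^{m,p}_{2n,N+n;2\ell}$ and then include this into $\A^{m,p}_{n,N;\ell}$. But the remainder space of $\A^{m,p}_{2n,N+n;2\ell}$ is $W^{m,p}_{\gamma_{N+n}}$, and the mixed term $\chi(r)\,a^i_n(\theta)\,(\log r)^i\,r^{-n}\cdot g$ with $g\in W^{m,p}_{\gamma_N}$ and $1\le i\le n+\ell$ (which can now occur, since $\ell>-n$) only lands in $W^{m,p}_{\gamma_N+n-\varepsilon}$, not in $W^{m,p}_{\gamma_{N+n}}$; so the intermediate inclusion need not hold. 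The fix is immediate: target $\A^{m,p}_{n,N;\ell}$ directly. Its remainder space is $W^{m,p}_{\gamma_N}$, and the same mixed term lies there since $n-\varepsilon>0$ for $n\ge 1$ (while for $n=0$ one has $\ell=0$ and hence no logarithm at the leading level, exactly as in your (i) argument). The asymptotic part $ab$ is unchanged: product terms at order $K\le N$ have the right coefficient regularity and log-degree $\le K+2\ell\le K+\ell$, and those with $K>N$ are absorbed into $W^{m,p}_{\gamma_N}$ since $\gamma_N+\frac dp<N+1\le K$. So your conclusion stands; only the routing through the intermediate space needs to be dropped.
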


These statements hold without changes for the corresponding complexified spaces.
In what follows we also collect some of the basic properties of the group of asymptotic diffeomorphisms 
(see \cite[Appendix B]{McOwenTopalov2}, \cite[Section 2]{McOwenTopalov3}).

\begin{Th}\label{th:A-composition}
Assume that $m>1+\frac{d}{p}$, $0\le n\le N$, and $-n\le\ell\le 0$. Then we have:
\begin{itemize}
\item[(i)] The composition map 
$\A^{m,p}_{n,N;\ell}\times\A D^{m,p}_{N;\ell}\to\A^{m,p}_{n,N;\ell}$, $(u,\varphi)\mapsto u\circ\varphi$, 
and the inverse map
$\A D^{m+1,p}_{n,N;\ell}\to\A D^{m+1,p}_{n,N;\ell}$, $\varphi\mapsto\varphi^{-1}$, are continuous.
\item[(ii)] The maps $\A^{m+1,p}_{n,N;\ell}\to\A^{m,p}_{n,N;\ell}$, $\varphi\mapsto\varphi^{-1}$,  and
$\A^{m+1,p}_{n,N;\ell}\times\A D^{m,p}_{N;\ell}\to\A^{m,p}_{n,N;\ell}$, $(u,\varphi)\mapsto u\circ\varphi$,
are $C^1$.
\end{itemize}
In particular, $\A D^{m,p}_{N;\ell}$ is a topological group for $m>2+\frac{d}{p}$.
\end{Th}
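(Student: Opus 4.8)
The plan is to reduce everything to the behaviour of the asymptotic spaces $\A^{m,p}_{n,N;\ell}$ under the two elementary nonlinear operations $u\mapsto u\circ\varphi$ (right translation) and $\varphi\mapsto\varphi^{-1}$ (inversion). Since $m>1+\frac{d}{p}$, any $f\in\A^{m,p}_{N;\ell}$ is $C^1$ with $\dd f(x)\to 0$ as $|x|\to\infty$ (by \eqref{eq:W-estimate*} applied to the remainder together with the explicit decay of the asymptotic part), so $\det(\dd\varphi)>0$ is an open condition and $\A D^{m,p}_{N;\ell}$ is an open subset of the affine Banach space $\id+\A^{m,p}_{N;\ell}$; we work throughout in the coordinate $f=\varphi-\id$. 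Once the two parts of (i) are known, the group operations on $\A D^{m,p}_{N;\ell}$, rewritten as $\varphi\circ\psi-\id=(\psi-\id)+(\varphi-\id)\circ\psi$ and $\varphi^{-1}-\id=-(\varphi-\id)\circ\varphi^{-1}$, are continuous; and for $m>2+\frac{d}{p}$ both composition (which needs only $m>1+\frac{d}{p}$) and inversion (the second half of (i), valid at any regularity level $>2+\frac{d}{p}$) are continuous, so $\A D^{m,p}_{N;\ell}$ is a topological group.

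The heart of the matter is the composition bound in (i): $(u,\varphi)\mapsto u\circ\varphi$ maps $\A^{m,p}_{n,N;\ell}\times\A D^{m,p}_{N;\ell}$ into $\A^{m,p}_{n,N;\ell}$ continuously. First I would split $u=a+g$ into its asymptotic part $a$, with spherical coefficients $a^j_k\in H^{m+1+N-k,p}(\s^{d-1})$, and its remainder $g\in W^{m,p}_{\gamma_N}$, and write $\varphi=\id+f$ with $f\in\A^{m,p}_{N;\ell}$. For the remainder one uses a weighted $\omega$-lemma: since $\langle\varphi(x)\rangle$ is comparable to $\langle x\rangle$ uniformly in $x$ ($f$ being bounded, in fact decaying), and since the chain rule together with the Banach-algebra property (Proposition \ref{prop:A-products}) controls all the arising products, one gets $g\circ\varphi\in W^{m,p}_{\gamma_N}$ with continuous dependence on $(g,\varphi)$. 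For the asymptotic part one substitutes $x\mapsto\varphi(x)=x+f(x)$ into each monomial $\chi(r)(\log r)^i r^{-k}a^j_k(\theta)$: setting $\rho:=|\varphi(x)|$, $\vartheta:=\varphi(x)/\rho$, Taylor-expanding $\rho^{-k}$, $(\log\rho)^i$ and $a^j_k(\vartheta)$ about $r$, $\theta$, and using $f=O\big(\chi(r)r^{-n}\big)+\cdots$, every term produced is again admissible — a spherical function of the correct Sobolev order (via the multiplication/Leibniz rules on $H^{s,p}(\s^{d-1})$) times $r^{-k'}(\log r)^{i'}$ with $k'\ge k$ and $i'\le i$ — while the infinite tail is absorbed into $W^{m,p}_{\gamma_N}$; the index shifts are exactly those recorded in Proposition \ref{prop:A-properties}(ii)--(iv), and the hypothesis $-n\le\ell\le 0$ is precisely what keeps these products inside the space (Proposition \ref{prop:A-products}(ii)). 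Continuity of $u\circ\varphi$ in $(u,\varphi)$ then follows from continuity of each of these elementary operations.

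The inverse map and the $C^1$ statements are bootstrapped from the composition bound. Well-definedness: for $\varphi=\id+f\in\A D^{m,p}_{N;\ell}$ the inverse $\psi=\id+g$ is a priori a $C^1$-diffeomorphism (inverse function theorem, $m>1+\frac{d}{p}$), and differentiating $\varphi\circ\psi=\id$ gives $\dd\psi=\big((\dd\varphi)\circ\psi\big)^{-1}$; one then bootstraps, using Proposition \ref{prop:A-properties}(ii), Proposition \ref{prop:A-products}, and the composition bound (the stability of the asymptotic part under $x\mapsto\psi(x)$ being checked exactly as above), to conclude $g\in\A^{m,p}_{N;\ell}$. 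Continuity of $\varphi\mapsto\varphi^{-1}$ at the same regularity — the second half of (i) — follows by the Ebin--Marsden limiting argument: for $\varphi_j\to\varphi$ one writes $\varphi_j^{-1}-\varphi^{-1}=\varphi^{-1}\circ\varphi\circ\varphi_j^{-1}-\varphi^{-1}$ and invokes continuity of composition together with uniform bounds along the sequence. Finally, the $C^1$ statements in (ii) follow from the chain-rule formulas $D_u(u\circ\varphi)h=h\circ\varphi$, $D_\varphi(u\circ\varphi)k=\big((\dd u)\circ\varphi\big)k$, and $D(\varphi\mapsto\varphi^{-1})k=-\big((\dd\varphi)\circ\varphi^{-1}\big)^{-1}\big(k\circ\varphi^{-1}\big)$: each right-hand side is continuous in all of its arguments by the composition bound applied one regularity level down, which forces the one-derivative loss (since $\dd u$ lies one level below $u$). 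I expect the main obstacle to be the asymptotic-part computation in the previous paragraph: one must verify by a careful Fa\`a di Bruno expansion on the asymptotic scale that the substitution $x\mapsto x+f(x)$ never creates a term with leading power below $r^{-n}$ or a logarithmic factor of order exceeding $N+\ell$, so that $\A^{m,p}_{n,N;\ell}$ is genuinely stable under composition and inversion; everything else is a routine application of Propositions \ref{prop:A-properties} and \ref{prop:A-products}. The complete argument is carried out in \cite[Appendix B]{McOwenTopalov2} and \cite[Section 2]{McOwenTopalov3}.
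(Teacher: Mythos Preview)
The paper does not prove this theorem at all: it is stated in Appendix~\ref{ap:A-spaces} purely as a summary of known results, with the reader referred to \cite[Appendix B]{McOwenTopalov2} and \cite[Section 2]{McOwenTopalov3} for the arguments. Your proposal therefore goes considerably beyond what the paper itself provides, but it is faithful to the approach taken in those references: the decomposition $u=a+g$ into asymptotic part and remainder, the weighted $\omega$-lemma for $g\circ\varphi$ via the comparability $\langle\varphi(x)\rangle\sim\langle x\rangle$ and the Banach-algebra property, the Taylor/Fa\`a di Bruno expansion of the asymptotic part under $x\mapsto x+f(x)$ checked against the index bookkeeping of Propositions~\ref{prop:A-properties} and~\ref{prop:A-products}, the bootstrap for the inverse from $\dd\psi=((\dd\varphi)\circ\psi)^{-1}$, and the chain-rule formulas for the $C^1$ statements with the expected one-derivative loss --- this is exactly the skeleton of the proofs in \cite{McOwenTopalov2,McOwenTopalov3}. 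Your identification of the delicate point (that the substitution never lowers the leading power below $r^{-n}$ or raises the log-order above $N+\ell$, which is where the hypothesis $-n\le\ell\le 0$ enters) is also correct.
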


The composition has the following stronger property:

\begin{Prop}\label{prop:A-composition}
Assume that $m>1+\frac{d}{p}$, $0\le n\le N$, and $-n\le\ell\le 0$. Then, the map
$\A^{m,p}_{n,N+n;-n}\times\A D^{m,p}_{N;0}\to\A^{m,p}_{n,N+n;-n}$, $(u,\varphi)\mapsto u\circ\varphi$, 
is continuous, and the map
$\A^{m+1,p}_{n,N+n;-n}\times\A D^{m,p}_{N;0}\to\A^{m,p}_{n,N+n;-n}$, $(u,\varphi)\mapsto u\circ\varphi$, 
is $C^1$.
\end{Prop}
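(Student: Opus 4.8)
The plan is to fix $u\in\A^{m,p}_{n,N+n;-n}$ and $\varphi=\id+g$ with $g\in\A^{m,p}_{N;0}$, decompose $u=a+f$ into its asymptotic part $a$ (a finite sum of monomials) and its remainder $f\in W^{m,p}_{\gamma_{N+n}}$, treat $a\circ\varphi$ and $f\circ\varphi$ separately, and keep track of the (analytic) dependence on the pair $(u,\varphi)$ throughout; the $C^1$ assertion will then be deduced from the continuity assertion. Throughout one works modulo compactly supported smooth functions, which lie in $W^{m,p}_{\gamma_{N+n}}\subseteq\A^{m,p}_{n,N+n;-n}$ and are negligible.

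For the remainder, I would use that $\varphi$ is a global diffeomorphism of $\R^d$ with $g=\varphi-\id$ bounded and $\partial^\alpha g\in\A^{m-|\alpha|,p}_{|\alpha|,N+|\alpha|;-|\alpha|}$ for $1\le|\alpha|$ (Proposition \ref{prop:A-properties}), so that $c_0\langle x\rangle\le\langle\varphi(x)\rangle\le C_0\langle x\rangle$ and $\det\dd\varphi^{-1}$ is bounded; writing $\partial^\alpha(f\circ\varphi)$, $|\alpha|\le m$, by the chain rule as a finite sum of terms $(\partial^\beta f)\circ\varphi$ times products of derivatives of $\varphi$, and changing variables $y=\varphi(x)$, one gets that $f\mapsto f\circ\varphi$ maps $W^{m,p}_{\gamma_{N+n}}\to W^{m,p}_{\gamma_{N+n}}\subseteq\A^{m,p}_{n,N+n;-n}$ boundedly, with bound locally uniform in $\varphi$; joint continuity in $(f,\varphi)$ follows by density of $C^\infty_c$ in $W^{m,p}_{\gamma_{N+n}}$ together with the continuity of $\varphi\mapsto\varphi^{-1}$ and $\varphi\mapsto\det\dd\varphi^{-1}$ (Theorem \ref{th:A-composition}).

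For the asymptotic part, $a$ is a finite sum of monomials $T=\chi(r)\,r^{-k}(\log r)^{j}\,a^j_k(\theta)$ with $n\le k\le N+n$, $0\le j\le k-n$, where $a^j_k$ is extended to a degree-$0$ homogeneous function on $\R^d\setminus\{0\}$. I would compute $T\circ\varphi=\chi(|\varphi|)\,|\varphi|^{-k}(\log|\varphi|)^{j}\,a^j_k\!\big(\varphi/|\varphi|\big)$ factor by factor, using that $|\varphi(x)|^2=r^2(1+h)$ with $h:=\chi(r)\big(2(\theta\cdot g)/r+|g|^2/r^2\big)\in\A^{m,p}_{1,N+1;-1}$ by Proposition \ref{prop:A-properties}(iii) and Proposition \ref{prop:A-products}, and that $1+h$ is bounded below by a positive constant since $\varphi$ is a diffeomorphism with $\varphi-\id$ bounded. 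The real-analytic functions $s\mapsto(1+s)^{-k/2}-1$ and $s\mapsto\log(1+s)$ vanish at $s=0$ and are analytic on a neighborhood of the range of $h$, so by the calculus of real-analytic functions of a vanishing element of an asymptotic algebra (cf.\ \cite[Appendix B]{McOwenTopalov2}) one gets $(1+h)^{-k/2}\in 1+\A^{m,p}_{1,N+1;-1}$ and $\log(1+h)\in\A^{m,p}_{1,N+1;-1}$; hence $|\varphi|^{-k}=r^{-k}(1+h)^{-k/2}$ and $(\log|\varphi|)^{j}=\big(\log r+\tfrac12\log(1+h)\big)^{j}$ expand into $r^{-k}$ and $(\log r)^i$ times elements of $\A^{m,p}_{1,N+1;-1}$. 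Likewise $\varphi/|\varphi|=\theta+\eta$ with $\eta\in\A^{m,p}_{1,N+1;-1}$ (vector-valued), so composing $a^j_k$ (smooth on $\s^{d-1}$) gives $a^j_k(\varphi/|\varphi|)\in a^j_k(\theta)+\A^{m,p}_{1,N+1;-1}$. Multiplying these factors by Proposition \ref{prop:A-products} and Proposition \ref{prop:A-properties}(iii),(iv), and then truncating the resulting expansion at order $r^{-(N+n)}$ — which is legitimate because $\gamma_0+\tfrac{d}{p}<1$, so every term of order $r^{-(N+n+1)}$ or deeper, times any power of $\log r$, lies in $W^{m,p}_{\gamma_{N+n}}$ — yields $T\circ\varphi\in\A^{m,p}_{n,N+n;-n}$. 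The point of the hypothesis $u\in\A^{m,p}_{n,N+n;-n}$ (rather than $\A^{m,p}_{n,N;-n}$) is precisely that the corrections produced by the order-$r^{-N}$ expansion of $\varphi$ reach down to order $r^{-(k+N)}$, while the budget $-n$ in the log index is what absorbs the products by the extra powers of $\log r$ and $\log(1+h)$. Summing over the monomials and adding $f\circ\varphi$ gives the first claim; continuity in $(u,\varphi)$ is inherited from continuity of the analytic functional calculus, of the products in Proposition \ref{prop:A-products}, and of $\varphi\mapsto g$.

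For the $C^1$ statement, the partial derivative in $u$ is the continuous linear map $v\mapsto v\circ\varphi$ from the first part, while the partial derivative in $\varphi$ should be $\delta\varphi\mapsto\big((\dd u)\circ\varphi\big)(\delta\varphi)$; this is well-defined and continuous because $u\in\A^{m+1,p}_{n,N+n;-n}$ gives $\dd u\in\A^{m,p}_{n+1,N+n+1;-n-1}\subseteq\A^{m,p}_{n,N+n;-n}$ (Proposition \ref{prop:A-properties}(i),(ii)), the composition $(\dd u)\circ\varphi$ is covered by the first part at regularity $m$, and multiplication by $\delta\varphi\in\A^{m,p}_{N;0}$ is bounded by Proposition \ref{prop:A-products}. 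For $\varphi$ near $\psi$, writing $u\circ\varphi-u\circ\psi-\big((\dd u)\circ\psi\big)(\varphi-\psi)=\int_0^1\!\big[(\dd u)\circ(\psi+t(\varphi-\psi))-(\dd u)\circ\psi\big](\varphi-\psi)\,dt$ and estimating the integrand with the continuity from the first part shows that the composition map is differentiable with the stated derivative, and continuity of the derivative then gives that it is $C^1$ (cf.\ \cite[Theorem 3.4.3]{Hamilton}, as in the proof of Theorem \ref{th:A-composition}(ii)). I expect the main obstacle to be the middle step: establishing that each monomial $\chi(r)r^{-k}(\log r)^j a^j_k(\theta)$ composed with $\id+g$ lands in $\A^{m,p}_{n,N+n;-n}$, which requires the real-analytic functional calculus in the asymptotic algebras for $(1+h)^{-k/2}$, $\log(1+h)$ and $a^j_k(\cdot/|\cdot|)$, together with the careful accounting of orders and of log degrees.
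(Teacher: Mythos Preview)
Your proposal is correct and follows the standard route: decompose $u=a+f$, handle the weighted-Sobolev remainder via the chain rule and change of variables, expand each asymptotic monomial using $|\varphi|^2=r^2(1+h)$ with $h\in\A^{m,p}_{1,N+1;-1}$ and the analytic functional calculus, then deduce the $C^1$ statement from the continuity via the integral mean-value identity. The paper's own proof is essentially a pointer to earlier work (\cite[Corollary~3.1, Lemma~3.1]{McOwenTopalov3} and \cite[Corollary~5.1]{McOwenTopalov2}); what you have sketched is precisely the content of those references, so the approaches coincide.
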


\begin{proof}[Proof of Proposition \ref{prop:A-composition}]
The fact that the map
$\A^{m,p}_{n,N+n;-n}\times\A D^{m,p}_{N;0}\to\A^{m,p}_{n,N+n;-n}$, $(u,\varphi)\mapsto u\circ\varphi$, 
is well defined follows from \cite[Corollary 3.1]{McOwenTopalov3}. In view of \cite[Corollary 5.1]{McOwenTopalov2},
the other two properties follow easily by inspection along the lines of the proof of \cite[Lemma 3.1]{McOwenTopalov3}.
\end{proof}

\end{document}